\documentclass[11pt]{article}

\usepackage[a4paper,margin=1in]{geometry}
\usepackage[T1]{fontenc}
\usepackage{lmodern}
\usepackage{microtype}
\usepackage{amsmath,amssymb,amsthm,mathtools}
\usepackage{hyperref}
\hypersetup{
	colorlinks=true,
	linkcolor=blue,
	filecolor=magenta,
	urlcolor=cyan,
	citecolor=red,
	pdftitle={A higher-connectivity spectral Ore theorem for triangle-free graphs},
	pdfsubject={Extremal spectral graph theory}
}
\newtheorem{theorem}{Theorem}[section]
\newtheorem{lemma}[theorem]{Lemma}
\newtheorem{proposition}[theorem]{Proposition}
\newtheorem{corollary}[theorem]{Corollary}

\theoremstyle{definition}

\theoremstyle{remark}

\newtheorem{problem}[theorem]{Problem}

\newcommand{\V}{V}
\newcommand{\E}{E}
\newcommand{\e}{e}
\newcommand{\rhoA}{\rho}
\newcommand{\comp}[1]{\overline{#1}}
\newcommand{\floor}[1]{\left\lfloor #1\right\rfloor}
\newcommand{\ceil}[1]{\left\lceil #1\right\rceil}

\title{A higher-connectivity spectral Ore theorem for triangle-free graphs}

\author{Joyentanuj Das\footnote{Department of Mathematics, College of Engineering and Technology, SRM Institute of Science and Technology, Kattankulathur, Chennai, 603203, India. \indent  Emails: joyentanuj@gmail.com,  joyentad@srmist.edu.in.} \quad and \quad Sayan Gupta\footnote{School of Mathematical Sciences, NISER Bhubaneswar (An OCC of Homi Bhabha National Institute, Mumbai, 400094), India. Email: sayan.gupta@niser.ac.in}}
\date{}

\begin{document}
\maketitle

\begin{abstract}
Let $B_{n,k}$ be the graph obtained from the balanced complete bipartite
graph on $n$ vertices by deleting a matching of size $k$.  If $G$ is an
$n$-vertex triangle-free graph with $\kappa(\comp G)\geq k$, we prove that
$\rhoA(G)\leq\rhoA(B_{n,k})$ for $n\geq4k+2$, with equality precisely when
$G\cong B_{n,k}$, and we compute $\rhoA(B_{n,k})$ explicitly.  We also
solve the bipartite problem for every $n\geq2k+1$, determine the boundary
value $\operatorname{spex}_{\kappa}(2k,K_3;k)=k-1$, and settle the full
problem for $k=2$.  In particular, $B_{n,2}$ is uniquely extremal exactly
from order $6$ onward.  For $k=1$, equivalently when the complement is
connected, $B_{n,1}=K_{\ceil{n/2},\floor{n/2}}-e$ is uniquely extremal for
every $n\geq3$.
\end{abstract}

\noindent\textbf{2020 Mathematics Subject Classification.}
05C35, 05C50, 15A18.

\medskip
\noindent\textbf{Keywords.}
Spectral radius; Tur\'an problem; vertex-connectivity; triangle-free graph;
graph complement; matching.

\section{Introduction and literature survey}

All graphs in this paper are finite, simple, and undirected.  The starting
point is Mantel's theorem \cite{Mantel1907}, which asserts that a
triangle-free graph of order $n$ has at most $\floor{n^2/4}$ edges, with
equality only for the balanced complete bipartite graph.  Tur\'an
\cite{Turan1941} extended this result from triangles to complete graphs:
among all $K_{r+1}$-free graphs of order $n$, the balanced complete
$r$-partite graph $T_r(n)$ is the unique graph of maximum size.  These
theorems are foundational examples of extremal graph theory.

Ore \cite[p.~216]{Ore1962} asked for the maximum number of edges in an
$n$-vertex $K_{r+1}$-free graph whose complement is connected.  After
taking complements, this is equivalent to minimizing the number of edges
in a connected graph with independence number at most $r$.  The edge
problem was solved independently by Christophe et al.
\cite{ChristopheEtAl2008} and Gitler and Valencia
\cite{GitlerValencia2010}; a short proof and a structural treatment were
subsequently given by Bougard and Joret \cite{BougardJoret2008}.  In the
original language, the maximum is
\[
 \e(T_r(n))-r+1.
\]
The complement condition matters: $\comp{T_r(n)}$ is a disjoint union of
$r$ cliques and is therefore disconnected when $r\geq2$.

The adjacency spectral radius $\rhoA(G)$ is a natural substitute for the
number of edges.  Nikiforov's inequality \cite{Nikiforov2002} implies that
a $K_{r+1}$-free graph with $m$ edges satisfies
\[
 \rhoA(G)^2\leq 2\left(1-\frac1r\right)m,
\]
and his spectral Tur\'an theorem \cite{Nikiforov2007} identifies $T_r(n)$
as the unique $K_{r+1}$-free graph of maximum spectral radius.  Surveys of
this circle of ideas include \cite{LiLiuFeng2022,Nikiforov2011}.  Liu and
Ning \cite{LiuNing2026} recently established a spectral version of Ore's
problem: for $n$ sufficiently large relative to $r$, the unique
$K_{r+1}$-free graph of largest spectral radius among graphs with connected
complement is obtained from $T_r(n)$ by deleting a star on $r$ vertices.
Their theorem holds for $n\geq65r^2$ and explicitly raises the problem of
improving the order hypothesis and of treating stronger connectivity
requirements.  When $r=2$, their construction is
$K_{\ceil{n/2},\floor{n/2}}-e$ and their order hypothesis becomes
$n\geq260$.  Corollary~\ref{cor:k-one} below removes this order
restriction completely, proving the same conclusion for every $n\geq3$.
More generally, Theorem~\ref{thm:main} replaces connectedness of the
complement by $k$-connectivity and determines the unique extremal graph
when $n\geq4k+2$.  The additional exact results in Section~4 solve the
bipartite problem throughout its natural range $n\geq2k+1$, determine the
boundary value at $n=2k$, and solve the unrestricted problem for $k=2$ at
every admissible order.

Higher connectivity already has a substantial edge-extremal history.
Bougard and Joret \cite{BougardJoret2008} studied the minimum size
$f(n,\alpha,k)$ of a $k$-connected graph of order $n$ and independence
number $\alpha$.  They settled the connected and $2$-connected cases,
proved several structural refinements of Brouwer's theorem
\cite{Brouwer1981}, and formulated a general conjecture for $k\geq3$.
They also verified the conjectured value for $\alpha=2$ and $n\geq2k$.
In complementary language, $\alpha=2$ is precisely the triangle-free
case considered here.

Triangle-free graphs have also served as a testing ground for sharper
spectral phenomena.  The spectral Mantel inequality
$\rhoA(G)^2\leq \e(G)$ is contained in \cite{Nikiforov2002} and goes back
to Nosal \cite{Nosal1970}.  Lin, Ning, and Wu \cite{LinNingWu2021}
determined the maximum
spectral radius of a non-bipartite triangle-free graph of prescribed
order.  They also proved that every such graph with $m$ edges satisfies
$\rhoA(G)\leq\sqrt{m-1}$, improving the general bound
$\rhoA(G)\leq\sqrt m$; equality holds precisely when $G$ consists of a
$5$-cycle together with possibly some isolated vertices.  Zhai and Shu
\cite{ZhaiShu2022} refined the fixed-size result, and Li, Feng, and Peng
\cite{LiFengPeng2024} completed the remaining even-size extremal problem.
In another recent direction, Hou, Xie, and Fang
\cite{HouXieFang2026} proved that a connected triangle-free graph of
order $n$ and diameter three has spectral radius at most that of
$K_{\ceil{n/2},\floor{n/2}}-e$.  Their result concerns the subclass of
connected triangle-free graphs having diameter three.  By contrast,
Corollary~\ref{cor:k-one} assumes only that the complement is connected
and permits disconnected graphs as well as connected graphs of arbitrary
diameter.
These results show that forcing non-bipartiteness creates a genuine
spectral deficit, just as in the classical edge setting.

Motivated by these developments, define
\[
 \operatorname{spex}_{\kappa}(n,K_3;k)
 :=\max\{\rhoA(G): |\V(G)|=n,\ K_3\nsubseteq G,
                    \ \kappa(\comp G)\geq k\}.
\]
Our main result gives this parameter exactly when $n\geq4k+2$ and
characterizes the unique extremal graph: one deletes a $k$-matching from
the balanced complete bipartite graph.  An exact quotient calculation
gives the candidate's spectral radius.  The spectral Mantel inequality
and the sharp non-bipartite edge bound force a large-order extremizer to be
bipartite.  In a bipartition, $k$-connectivity of the complement and
K\H{o}nig's theorem force a $k$-matching among the missing cross-edges;
Perron--Frobenius monotonicity and the quotient formula complete the
balancing and uniqueness arguments.  Combining this bipartite reduction
with the order-based theorem of Lin, Ning, and Wu yields the complete
$k=2$ classification.

The paper has five sections.  Section~2 records notation and gives
self-contained proofs of the density and spectral inequalities used later.
Section~3 analyzes the proposed extremal construction.  Section~4 proves
the main theorem, establishes the exact small-connectivity and bipartite
results, and derives an asymptotic expansion.  Section~5 lists the natural
remaining problems.

\section{Notation and preliminary results}

For a graph $G$, write $\V(G)$ and $\E(G)$ for its vertex and edge sets,
$\e(G)=|\E(G)|$ for its size, and $\comp G$ for its complement.  The
adjacency matrix of $G$ is denoted by $A(G)$, and its largest eigenvalue by
$\rhoA(G)$.  If $G$ is connected, a positive unit eigenvector belonging to
$\rhoA(G)$ is called a Perron vector.  We write $K_{p,q}$ for the complete
bipartite graph with part sizes $p$ and $q$, and $M_k$ for a matching with
$k$ edges.

The vertex-connectivity $\kappa(H)$ is the minimum number of vertices whose
deletion disconnects $H$ or leaves a single vertex; as usual,
$\kappa(K_n)=n-1$.  A graph is $k$-connected if it has at least $k+1$
vertices and remains connected after the deletion of fewer than $k$
vertices.  For a graph $F$, let $\tau(F)$ and $\nu(F)$ denote its minimum
vertex-cover number and maximum matching number, respectively.  We shall
use K\H{o}nig's bipartite matching theorem \cite{Konig1931} in the form
\begin{equation}\label{eq:konig}
 \tau(F)=\nu(F)\qquad\text{for every bipartite graph }F.
\end{equation}

We begin with two classical edge estimates that drive the structural
reduction.  The first is Mantel's theorem \cite{Mantel1907}; the second is
the sharp non-bipartite refinement due to Erd\H{o}s \cite{Erdos1962}.
Their proofs are omitted because both results are classical.

\begin{lemma}\label{lem:density}
Let $G$ be a triangle-free graph of order $n$ and size $m$.
\begin{enumerate}
\item[(i)] One has $m\leq\floor{n^2/4}$, with equality if and only if
$G\cong K_{\floor{n/2},\ceil{n/2}}$.
\item[(ii)] If $G$ is non-bipartite, then
\[
 m\leq \floor{\frac{(n-1)^2}{4}}+1.
\]
\end{enumerate}
\end{lemma}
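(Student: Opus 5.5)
For part (i), I will use the standard neighbourhood-sum argument.  Since $G$ is triangle-free, every edge $uv$ satisfies $N(u)\cap N(v)=\emptyset$, so $d(u)+d(v)\leq n$.  Summing this over all edges gives
\[
 \sum_{v}d(v)^2=\sum_{uv\in\E(G)}\bigl(d(u)+d(v)\bigr)\leq mn .
\]
By Cauchy--Schwarz, $\sum_v d(v)^2\geq (2m)^2/n$, and combining the two bounds yields $m\leq n^2/4$.  Integrality then gives $m\leq\floor{n^2/4}$.

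For the equality case I will argue directly rather than through the inequality chain, since $\floor{n^2/4}<n^2/4$ when $n$ is odd.  Take a vertex $v$ of maximum degree $\Delta$.  Its neighbourhood $N(v)$ is independent, so every edge of $G$ meets the set $S=\V(G)\setminus N(v)$, which has $n-\Delta$ vertices.  Hence
\[
 m\leq \sum_{u\in S}d(u)\leq (n-\Delta)\Delta\leq \floor{n^2/4}.
\]
If equality holds throughout, then three things follow:
\begin{itemize}
\item[(a)] $S$ spans no edges, since otherwise those edges would be counted twice in the sum;
\item[(b)] every vertex of $S$ has degree exactly $\Delta$;
\item[(c)] $\Delta\in\{\floor{n/2},\ceil{n/2}\}$.
\end{itemize}
By (a), $G$ is bipartite with parts $N(v)$ and $S$.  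Its edge count is $|N(v)|\cdot|S|=\Delta(n-\Delta)$, which by (c) equals $\floor{n^2/4}$, so $G\cong K_{\floor{n/2},\ceil{n/2}}$.

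For part (ii) I will use induction on $n$, removing a shortest odd cycle.  Let $C$ be a shortest odd cycle in $G$, of length $2\ell+1\geq5$.  Minimality of $C$ forces two facts.
\begin{itemize}
\item[(a)] $C$ is induced, since any chord would create a shorter odd cycle.
\item[(b)] Every vertex outside $C$ has at most two neighbours on $C$.  A vertex with three or more neighbours on $C$ would split $C$ into arcs, and one of the resulting cycles through that vertex would be a shorter odd cycle (here $\ell\geq2$ is used).
\end{itemize}
The graph $G-V(C)$ has $n-2\ell-1$ vertices and is triangle-free, so part (i) bounds its size by $\floor{(n-2\ell-1)^2/4}$.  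Adding the $2\ell+1$ edges of $C$ and at most $2(n-2\ell-1)$ edges between $C$ and the rest gives
\[
 m\leq \floor{\frac{(n-2\ell-1)^2}{4}}+2\ell+1+2(n-2\ell-1).
\]
The remaining step is to show that this right-hand side is at most $\floor{(n-1)^2/4}+1$ for every $\ell\geq2$ with $2\ell+1\leq n$.

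I expect this final arithmetic comparison to be the main obstacle.  At $\ell=2$ the bound is exactly $\floor{(n-5)^2/4}+2n-5$, and for both parities of $n$ this equals $\floor{(n-1)^2/4}+1$.  For $\ell>2$ I will show that the right-hand side, viewed as a function of $\ell$, does not increase.  Increasing $\ell$ by one lowers $\floor{(n-2\ell-1)^2/4}$ by roughly $n-2\ell-2$, while the linear terms $2\ell+1+2(n-2\ell-1)$ drop by $2$.  I will check these differences carefully, taking the floors and both parities of $n$ into account.

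The case $2\ell+1=n$ is immediate: then $G=C_n$, and $n\leq\floor{(n-1)^2/4}+1$ holds for $n\geq5$.
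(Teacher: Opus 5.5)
The paper gives no proof of this lemma. It calls both parts classical (Mantel for (i), Erd\H{o}s for (ii)) and omits the arguments. Your proposal is therefore a self-contained alternative to citation, and it is correct.

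In (i), the maximum-degree argument
\[
 m\le m+e(S)=\sum_{u\in S}d(u)\le(n-\Delta)\Delta\le\floor{n^2/4}
\]
gives both the bound and the equality case on its own, so the Cauchy--Schwarz step is redundant.

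In (ii), no induction is actually used: you apply (i) once, to $G-V(C)$. The two facts you need about a shortest odd cycle $C$ of length $2\ell+1\ge5$ are correct as stated. First, $C$ is induced. Second, an outside vertex has at most two neighbours on $C$: with three or more neighbours, every arc between consecutive neighbours has length at least $2$ by triangle-freeness, some arc has odd length $d\le 2\ell-3$, and it closes an odd cycle of length $d+2<2\ell+1$.

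The arithmetic you flag as the main obstacle reduces to an exact identity, so no difference or parity analysis is needed. Write $N=n-2\ell-1$, so that $n-1=N+2\ell$. Then
\[
 \frac{(n-1)^2}{4}+1-\Bigl(\frac{N^2}{4}+2N+2\ell+1\Bigr)=(\ell-2)(N+\ell)=(\ell-2)(n-\ell-1).
\]
Both this difference and $2N+2\ell+1$ are integers, so your right-hand side equals exactly
\[
 \floor{(n-1)^2/4}+1-(\ell-2)(n-\ell-1),
\]
which is at most $\floor{(n-1)^2/4}+1$ for every $\ell\ge2$. This also covers the case $2\ell+1=n$ (that is, $N=0$) without separate treatment.

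The citation buys the paper brevity. Your argument makes the paper's structural reduction independent of the edge-extremal literature: the remaining external inputs to the main theorem become K\H{o}nig's theorem, Nosal's inequality and Perron--Frobenius.
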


The bound in part~\textup{(ii)} is sharp.  For every $n\geq5$, equality is
attained by subdividing one edge of
\[
 K_{\floor{(n-1)/2},\ceil{(n-1)/2}}.
\]
Indeed, subdivision replaces one edge by two edges, so the resulting
triangle-free non-bipartite graph has $\floor{(n-1)^2/4}+1$ edges.  For
$b=\floor{n/2}$ and $s=\floor{n^2/4}$, a direct parity check gives
\begin{equation}\label{eq:nonbip-threshold}
 \floor{\frac{(n-1)^2}{4}}+1=s-b+1.
\end{equation}
Thus every triangle-free graph with at least $s-b+2$ edges is bipartite.
This is the $r=2$ instance of the stability lemma (Lemma 2.1) used in
\cite{LiuNing2026}.

The next inequality is usually called Nosal's spectral Mantel inequality.
It was proved by Nosal \cite{Nosal1970}; the formulation used here also
follows from Nikiforov's more general clique-number inequality
\cite{Nikiforov2002}.

\begin{lemma}\label{lem:nosal}
If $G$ is triangle-free with $m$ edges, then
\[
 \rhoA(G)^2\leq m.
\]
\end{lemma}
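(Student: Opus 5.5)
The plan is the standard Rayleigh-quotient argument. First reduce to the case that $G$ has no isolated vertices, or simply work with an arbitrary nonnegative unit eigenvector. Let $x$ be a nonnegative unit eigenvector of $A(G)$ for $\rho=\rhoA(G)$; such a vector exists by Perron--Frobenius, applied for instance to a component attaining the spectral radius. For each vertex $v$ write $N(v)$ for its neighbourhood and $d(v)=|N(v)|$. The eigenvalue equation gives $\rho x_v=\sum_{u\in N(v)}x_u$.

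Apply the eigenvalue equation twice:
\[
 \rho^2x_v=\sum_{u\in N(v)}\sum_{w\in N(u)}x_w .
\]
Because $G$ is triangle-free, every $w\in N(u)$ with $u\in N(v)$ satisfies $w\notin N(v)$. So each vertex $w$ other than $v$ appearing in the double sum lies outside $N(v)\cup\{v\}$, while $w=v$ appears exactly $d(v)$ times. The intended bound is
\[
 \rho^2x_v\le d(v)x_v+\sum_{w\notin N(v)\cup\{v\}}c_{vw}x_w ,
\]
where $c_{vw}=|N(v)\cap N(w)|$. This is not yet clean, so a different route is better.

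Instead use $\rho^2=\sum_v(\rho x_v)^2=\sum_v\bigl(\sum_{u\in N(v)}x_u\bigr)^2$, since $\|x\|=1$. Apply Cauchy--Schwarz in a way that exploits triangle-freeness. For an edge $uv$, the sets $N(u)$ and $N(v)$ are disjoint independent sets. Summing the eigen-equations therefore gives
\[
 \rho(x_u+x_v)=\sum_{w\in N(u)}x_w+\sum_{w\in N(v)}x_w\le\sum_{w}x_w\cdot[\,w\in N(u)\cup N(v)\,]\le \sum_w x_w,
\]
which yields $\rho(x_u+x_v)\le \|x\|_1$. Summing over all $m$ edges gives $\rho\sum_v d(v)x_v\le m\|x\|_1$. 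Combined with $\sum_v d(v)x_v=\sum_v\sum_{u\in N(v)}x_u=\rho\|x\|_1$, this gives $\rho^2\|x\|_1\le m\|x\|_1$. Since $x\neq0$ is nonnegative, $\|x\|_1>0$, hence $\rho^2\le m$.

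The key steps, in order, are the following.
\begin{enumerate}
\item[(a)] Choose a nonnegative eigenvector for $\rho$.
\item[(b)] For each edge $uv$, observe that $N(u)\cap N(v)=\emptyset$ by triangle-freeness, and deduce $\rho(x_u+x_v)\le\sum_w x_w$.
\item[(c)] Sum over edges and rewrite the left side via the double-counting identity $\sum_{uv\in E}(x_u+x_v)=\sum_v d(v)x_v=\rho\sum_v x_v$.
\end{enumerate}
The only delicate point is step (b): nonnegativity of $x$ is needed to drop the terms outside $N(u)\cup N(v)$. The edgeless case $m=0$ is trivial since then $\rho=0$.
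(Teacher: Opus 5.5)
Your final argument is correct. For an edge $uv$, triangle-freeness gives $N(u)\cap N(v)=\emptyset$, so for a nonnegative eigenvector $x$ you get $\rho(x_u+x_v)=\sum_{w\in N(u)\cup N(v)}x_w\le\sum_w x_w$. Summing over the $m$ edges and using $\sum_{uv\in E}(x_u+x_v)=\sum_v d(v)x_v=\rho\sum_v x_v$ then gives $\rho^2\sum_v x_v\le m\sum_v x_v$.

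The paper gives no proof of this lemma. It cites Nosal and observes that the statement is the case $\omega=2$ of Nikiforov's inequality $\rho^2\le 2(1-1/\omega)m$. The usual proof of that inequality takes a unit Perron vector and applies Cauchy--Schwarz, $\rho^2=\bigl(2\sum_{ij\in E}x_ix_j\bigr)^2\le 4m\sum_{ij\in E}x_i^2x_j^2$. It then bounds $\sum_{i,j}a_{ij}x_i^2x_j^2$ by $1-1/\omega$ via the Motzkin--Straus theorem applied to the probability vector $(x_i^2)$.

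Your edge-summation route is more elementary. It needs no Motzkin--Straus, only the existence of a nonnegative eigenvector for $\rho$, and no normalization at all. It also makes the role of the forbidden triangle transparent. However, it uses the hypothesis only locally, through disjoint neighbourhoods of adjacent vertices, and does not extend directly to $K_{r+1}$-free graphs. That extension is what the Cauchy--Schwarz plus Motzkin--Straus route buys.

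In a write-up, delete the abandoned double-eigen-equation attempt and the Cauchy--Schwarz remark, which your final argument never uses. Also note that the unit normalization of $x$ plays no role.
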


We will also use the following standard strict form of spectral
monotonicity, which is an immediate consequence of the
Perron--Frobenius comparison theorem; see, for example,
\cite[Chapter~8]{HornJohnson2013}.

\begin{lemma}\label{lem:pf}
If $H$ is connected and $G$ is a proper spanning subgraph of $H$, then
$\rhoA(G)<\rhoA(H)$.
\end{lemma}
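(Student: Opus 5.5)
This is the strict monotonicity statement. Write $A=A(H)$ and $B=A(G)$. Since $G$ is a proper spanning subgraph of $H$, we have $0\le B\le A$ entrywise and $B\neq A$. The plan is to compare Rayleigh quotients using a Perron vector of $G$, and to handle the case where $G$ is disconnected by passing to a connected component.

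First, choose a nonnegative unit eigenvector $y$ of $B$ for $\rhoA(G)$. Such a vector exists for every nonnegative symmetric matrix, by Perron--Frobenius applied to a component of $G$ attaining the spectral radius, extended by zero. Then
\[
 \rhoA(G)=y^{T}By\le y^{T}Ay\le \rhoA(H).
\]
The first inequality holds because $A-B\ge0$ and $y\ge0$. The second is the Rayleigh principle for the symmetric matrix $A$.

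Now suppose $\rhoA(G)=\rhoA(H)$. Then $y$ attains the maximum of the Rayleigh quotient of $A$, so $y$ is an eigenvector of $A$ for $\rhoA(H)$. Since $H$ is connected, $A$ is irreducible. By Perron--Frobenius, every nonnegative eigenvector of $A$ belonging to $\rhoA(H)$ is a positive multiple of the Perron vector, so $y>0$ entrywise.

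Next, equality in the first step forces $y^{T}(A-B)y=0$. Because $G\neq H$, there is an edge $uv\in\E(H)\setminus\E(G)$. That edge contributes $2y_uy_v>0$ to $y^{T}(A-B)y$, and all other contributions are nonnegative. This contradicts $y^{T}(A-B)y=0$, so $\rhoA(G)<\rhoA(H)$.

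The only delicate point is positivity of $y$ when $G$ is disconnected, since then $y$ may have zero entries. This is why the argument derives positivity from $y$ being an eigenvector of the irreducible matrix $A$, rather than from properties of $G$.
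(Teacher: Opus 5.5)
Your proof is correct, but it takes a more elementary route than the paper. The paper's proof is a one-line appeal to the strict comparison theorem for nonnegative matrices: if $0\le X\le Y$, $X\ne Y$ and $Y$ is irreducible, then $\rho(X)<\rho(Y)$. The paper quotes this from Horn--Johnson and applies it with $X=A(G)$ and $Y=A(H)$.

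You instead prove that comparison directly in the symmetric case. You take a nonnegative unit eigenvector $y$ of $B$ and use the sandwich $\rho(G)=y^{\mathsf T}By\le y^{\mathsf T}Ay\le\rho(H)$. Equality would put $y$ in the top eigenspace of the irreducible matrix $A$, forcing $y>0$. Then any edge of $H$ missing from $G$ contributes $2y_uy_v>0$ to $y^{\mathsf T}(A-B)y$, a contradiction.

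Your version needs only the Rayleigh principle and the positivity and simplicity of the Perron eigenvector of an irreducible matrix. It also handles a disconnected $G$ correctly, because it derives positivity of $y$ from $A$ rather than from $G$. The paper's citation is shorter and does not need symmetry, since the quoted theorem holds for arbitrary nonnegative matrices. However, only the symmetric case is ever used in the paper.
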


\begin{proof}
The matrices satisfy $0\leq A(G)\leq A(H)$ and are unequal, while $A(H)$
is irreducible because $H$ is connected.  The strict comparison theorem
for nonnegative matrices states that if $0\leq X\leq Y$, $X\ne Y$, and
$Y$ is irreducible, then $\rho(X)<\rho(Y)$.  Applying it with
$X=A(G)$ and $Y=A(H)$ gives $\rhoA(G)<\rhoA(H)$.
\end{proof}

\section{The extremal construction and its spectrum}

Let $p,q,k$ be positive integers with $p,q\geq k$.  Define
\[
 B_{p,q}^{(k)}:=K_{p,q}-M_k,
\]
where the deleted edges form a matching.  Up to isomorphism this graph is
independent of the chosen matching.  For $n\geq2k$, put
\[
 a=\ceil{n/2},\qquad b=\floor{n/2},\qquad
 B_{n,k}:=B_{a,b}^{(k)}.
\]
When it exists, let $n_0(k)$ denote the least integer $N$ such that
$B_{n,k}$ is the unique extremal graph defining
$\operatorname{spex}_{\kappa}(n,K_3;k)$ for every $n\geq N$.

\begin{proposition}\label{prop:connectivity}
If $p,q>k$, then
\[
 \kappa\bigl(\comp{B_{p,q}^{(k)}}\bigr)=k.
\]
\end{proposition}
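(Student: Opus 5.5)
Let $X=\{x_1,\dots,x_p\}$ and $Y=\{y_1,\dots,y_q\}$ be the two sides, and let the deleted matching be $x_iy_i$ for $1\le i\le k$. Then $H:=\comp{B_{p,q}^{(k)}}$ consists of a clique on $X$, a clique on $Y$, and exactly the $k$ cross-edges $x_iy_i$. I will prove $\kappa(H)\le k$ and $\kappa(H)\ge k$ separately.

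For the upper bound, delete $S=\{x_1,\dots,x_k\}$. Since $p>k$, the vertex $x_{k+1}$ remains, and $Y$ is nonempty. After removing $S$, no cross-edges are left, so $H-S$ is disconnected. Also $|V(H)|=p+q\ge 2k+2>k+1$, so this is an honest separating set of size $k$. Hence $\kappa(H)\le k$.

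For the lower bound, I must show that $H-S$ is connected for every $S$ with $|S|\le k-1$. Removing vertices from a clique leaves a clique, so $X\setminus S$ and $Y\setminus S$ each induce a connected (possibly empty) graph. If one of them is empty, $H-S$ is a single clique. It is nonempty, because $p+q>k-1$, and so it is connected. If both are nonempty, I need one surviving cross-edge. Each vertex of $S$ meets at most one of the $k$ disjoint edges $x_iy_i$, so at most $k-1$ of these edges are hit by $S$. Some edge $x_jy_j$ therefore survives and joins the two cliques, and $H-S$ is connected. Finally, $H$ has $p+q\ge k+1$ vertices, so it is $k$-connected, giving $\kappa(H)\ge k$.

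The main obstacle is only careful bookkeeping with the convention that $\kappa$ also accounts for deletions leaving a single vertex. This is handled by the order count $p+q\ge 2k+2$, which uses the hypothesis $p,q>k$. The same hypothesis guarantees that both sides survive the $k$-cut in the upper bound.
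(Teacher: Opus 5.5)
Your proof is correct and follows essentially the same route as the paper. You use the cut $\{x_1,\dots,x_k\}$ for the upper bound, and for the lower bound a surviving matching edge $x_jy_j$ joins the two remaining cliques. Your ``one side empty'' case is actually vacuous, since $p,q>k>|S|$ keeps both sides nonempty, but handling it does no harm.
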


\begin{proof}
Let $P,Q$ be the two bipartition classes, and label the deleted matching
as
\[
 M_k=\{x_i y_i:1\leq i\leq k\},
 \qquad x_i\in P,\quad y_i\in Q.
\]
Put $H=\comp{B_{p,q}^{(k)}}$.  Since $B_{p,q}^{(k)}$ has no edges inside
$P$ or inside $Q$, both $P$ and $Q$ induce cliques in $H$.  The only
edges of $H$ between these two cliques are the matching edges
$x_i y_i$, $1\leq i\leq k$.

We first prove $\kappa(H)\geq k$.  Let $S\subseteq\V(H)$ with
$|S|\leq k-1$.  Because the edges $x_i y_i$ are pairwise vertex-disjoint,
each vertex of $S$ is an endpoint of at most one matching edge.  Thus
$S$ can meet at most $|S|\leq k-1$ of the $k$ matching edges, and there is
an index $i$ for which
\[
 x_i,y_i\notin S.
\]
In particular, the cross-edge $x_i y_i$ survives in $H-S$.  Moreover,
$p,q>k>|S|$ implies that $P\setminus S$ and $Q\setminus S$ are both
nonempty, and they still induce cliques.  If $u\in P\setminus S$ and
$v\in Q\setminus S$, then
\[
 u,x_i,y_i,v
\]
is a walk in $H-S$ (with the obvious shortening if $u=x_i$ or
$v=y_i$).  Vertices in the same one of the two parts are adjacent.
Consequently every pair of vertices of $H-S$ is joined by a path, so
$H-S$ is connected.  Since this holds for every set $S$ of fewer than
$k$ vertices, $\kappa(H)\geq k$.

For the reverse inequality, let
\[
 X=\{x_1,\ldots,x_k\}\subseteq P.
\]
Since $p>k$, the set $P\setminus X$ is nonempty; the set $Q$ is also
nonempty.  Every cross-edge of $H$ has its endpoint in $P$ belonging to
$X$, so $H-X$ has no edge between $P\setminus X$ and $Q$.  These two
nonempty sets therefore lie in different components of $H-X$.  Hence
$X$ is a vertex cut of size $k$, proving $\kappa(H)\leq k$.  Combining
the two inequalities gives $\kappa(H)=k$.
\end{proof}

We next calculate the spectral radius exactly.

\begin{proposition}\label{prop:spectrum}
Let $p+q=n$ and $p,q>k$.  Then
\begin{equation}\label{eq:spectrum-general}
 \rhoA\bigl(B_{p,q}^{(k)}\bigr)^2
 =\frac{pq-2k+1+
 \sqrt{(pq-2k+1)^2-4(p-k)(q-k)}}{2}.
\end{equation}
For fixed $n$ and $k$, this quantity is strictly increasing as a function
of $pq$.  Consequently,
\[
 \rhoA\bigl(B_{p,q}^{(k)}\bigr)
 \leq\rhoA(B_{n,k}),
\]
with equality if and only if $\{p,q\}=\{a,b\}$.
\end{proposition}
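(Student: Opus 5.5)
The plan is to use an equitable partition of $B_{p,q}^{(k)}$ into four cells: $X=\{x_1,\dots,x_k\}$, $P'=P\setminus X$, $Y=\{y_1,\dots,y_k\}$, and $Q'=Q\setminus Y$.  Since $p,q>k$, all four cells are nonempty.  Each $x_i$ has $k-1$ neighbours in $Y$ and $q-k$ in $Q'$.  Each vertex of $P'$ has $k$ neighbours in $Y$ and $q-k$ in $Q'$.  The rows for $Y$ and $Q'$ are symmetric.  So the quotient matrix has the bipartite block form
\[
 \begin{pmatrix}0&C\\ D&0\end{pmatrix},\qquad
 C=\begin{pmatrix}k-1&q-k\\ k&q-k\end{pmatrix},\quad
 D=\begin{pmatrix}k-1&p-k\\ k&p-k\end{pmatrix}.
\]
The graph is connected: $p,q>k$ gives $P'\neq\emptyset$ and $Q'\neq\emptyset$, and these are joined to everything on the opposite side.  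Hence the spectral radius of $B_{p,q}^{(k)}$ equals the largest eigenvalue of the quotient, because the lifted Perron eigenvector of the quotient is positive.  The eigenvalues of the quotient are $\pm\sqrt{\mu}$, where $\mu$ ranges over the eigenvalues of $CD$.  I will compute $\operatorname{tr}(CD)=(k-1)^2+k(p-k)+k(q-k)+(p-k)(q-k)$, which simplifies to $pq-2k+1$.  I will also compute $\det(CD)=\det C\det D=(-(q-k))(-(p-k))=(p-k)(q-k)$.  The larger root of $\mu^2-(pq-2k+1)\mu+(p-k)(q-k)=0$ then gives \eqref{eq:spectrum-general}.  I will check that the discriminant is nonnegative; this follows because $CD$ is a positive matrix, so its eigenvalues are real.

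For monotonicity I will write $t=pq$.  With $n=p+q$ fixed, $(p-k)(q-k)=t-kn+k^2$, so both coefficients are affine in $t$.  Set $f(t)=\tfrac12\bigl(c+\sqrt{c^2-4d}\bigr)$ with $c=t-2k+1$ and $d=t-kn+k^2$.  Then $dc/dt=dd/dt=1$.  The cleanest route avoids differentiating the square root and uses the characteristic polynomial directly.  Here $\mu=f$ satisfies $g(\mu,t)=\mu^2-c\mu+d=0$, and $\partial g/\partial t=-\mu+1$.  Also $\partial g/\partial\mu=2\mu-c=\sqrt{c^2-4d}$.  This is positive provided the roots are distinct, which holds since $CD$ is positive, and Perron's theorem gives a simple dominant eigenvalue.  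Implicit differentiation gives $d\mu/dt=(\mu-1)/\sqrt{c^2-4d}$.  This is positive because $\mu=\rho^2>1$: the graph contains $K_{1,2}$ (for example, a vertex of $P'$ with two neighbours in $Q\supseteq Q'\cup Y$), and so $\rho\geq\sqrt2$.  Since $t$ takes integer values, I may instead compare directly.  An alternative that avoids calculus is to check $g(f(t),t+1)<0$, which forces $f(t+1)>f(t)$.

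Finally, for fixed $n$ the product $pq$ is uniquely maximized over integer splits exactly at $\{p,q\}=\{a,b\}$.  Strict monotonicity then gives the inequality together with the equality characterization.  There is one subtlety: $B_{n,k}$ requires $a,b>k$ for the formula to apply.  This is automatic, since $\min(p,q)\le b$ and $\min(p,q)>k$ give $b>k$.  The main obstacle is justifying that the quotient's largest eigenvalue equals $\rho$ and that the discriminant is strictly positive; both follow from Perron--Frobenius applied to the positive matrix $CD$ and to the connected graph.
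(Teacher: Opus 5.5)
Your proof is correct and is essentially the paper's argument. Your matrix $CD$ coincides with the paper's quotient matrix $Q$; you reach it through a four-cell equitable partition of the graph itself rather than a two-cell partition of $CC^{\mathsf T}$. The monotonicity step is the same implicit differentiation in $t=pq$.

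One point to tighten: the derivative argument needs $\mu(t)>1$ on the whole real $t$-interval, but your $K_{1,2}$ argument only covers values $t=pq$ realized by graphs. The paper gets this uniformly from $g(1,t)=k(k+2-n)<0$. Your discrete alternative also works, because $g(f(t_1),t_2)=-(t_2-t_1)\bigl(f(t_1)-1\bigr)<0$ only needs $f(t_1)>1$ at the realized value $t_1$.
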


\begin{proof}
Write $J_{r,s}$ for the $r$-by-$s$ all-ones matrix, put
$J_r=J_{r,r}$, and let $I_r$ denote the identity matrix of order $r$.
Label the deleted matching as $x_i y_i$, $1\leq i\leq k$, and order the
two parts as
\[
 P=(x_1,\ldots,x_k;P_0), \qquad  Q=(y_1,\ldots,y_k;Q_0),
\]
where $|P_0|=p-k$ and $|Q_0|=q-k$.  With respect to this ordering, the
bipartite adjacency matrix has the form
\[
 A\bigl(B_{p,q}^{(k)}\bigr)
 =\begin{pmatrix}0&C\\ C^{\mathsf T}&0\end{pmatrix},
\]
where the explicit block form of the $p$-by-$q$ matrix $C$ is
\begin{equation}\label{eq:C-block}
 C=
 \begin{pmatrix}
  J_k-I_k & J_{k,q-k}\\
  J_{p-k,k} & J_{p-k,q-k}
 \end{pmatrix}.
\end{equation}
Indeed, the only zero entries of $C$ are those corresponding to the
deleted pairs $x_i y_i$; all other pairs with one endpoint in each part
remain adjacent.

If $(x,y)^{\mathsf T}$ is an eigenvector with
eigenvalue $\lambda$, then $Cy=\lambda x$ and
$C^{\mathsf T}x=\lambda y$, so $CC^{\mathsf T}x=\lambda^2x$.
Equivalently, the nonzero eigenvalues of the displayed adjacency matrix
are the positive and negative singular values of $C$.  Hence
$\rhoA(B_{p,q}^{(k)})^2$ is the largest eigenvalue of
$CC^{\mathsf T}$.

We next compute this product explicitly.  Two distinct rows among the
first $k$ rows of $C$ have scalar product $q-2$, because their zero
entries occur in different columns.  Each of these rows has squared norm
$q-1$.  The scalar product of one of the first $k$ rows with an all-ones
row is $q-1$, while the scalar product of any two all-ones rows is $q$.
It follows that
\begin{equation}\label{eq:CCT-block}
 CC^{\mathsf T}
 =\begin{pmatrix}
  (q-2)J_k+I_k & (q-1)J_{k,p-k}\\
  (q-1)J_{p-k,k} & qJ_{p-k}
 \end{pmatrix}.
\end{equation}

We record carefully why $CC^{\mathsf T}$ is irreducible.  Since $q>k$,
we have $q-1>0$.  If $k\geq2$, then $q\geq k+1\geq3$, so also
$q-2>0$.  If $k=1$, the first diagonal block has order one and hence has
no off-diagonal entries; its unique entry is $q-1>0$.  Since also
$p-k>0$ and $q>0$, every entry in the block matrix
\eqref{eq:CCT-block} is strictly positive.  Thus
\[
 CC^{\mathsf T}>0
\]
entrywise.  The directed graph associated with a positive matrix has an
arc between every ordered pair of vertices and is therefore strongly
connected.  Equivalently, no simultaneous permutation of the rows and
columns can put a positive matrix into a proper block upper-triangular
form.  Hence $CC^{\mathsf T}$ is irreducible.

Partition its indices into the first $k$ rows and the remaining $p-k$
rows.  The block form \eqref{eq:CCT-block} shows that this is an equitable
partition.  Its quotient matrix is
\begin{equation}\label{eq:quotient}
 Q=\begin{pmatrix}
 kq-2k+1 &(p-k)(q-1)\\
 k(q-1)  &(p-k)q
 \end{pmatrix}.
\end{equation}
Indeed, the row sums from an index in the first class into the two classes
are
\[
 (q-1)+(k-1)(q-2)=kq-2k+1
 \quad\text{and}\quad (p-k)(q-1).
\]
For an index in the second class, the corresponding sums are $k(q-1)$ and
$(p-k)q$, giving \eqref{eq:quotient}.  Permutations within either index
class preserve $CC^{\mathsf T}$.  Since this matrix is irreducible, its
positive Perron eigenvector is unique up to scalar multiplication and
must therefore be fixed by all such permutations.  It is consequently
constant on each class.  Substitution of these two constant values shows
that its Perron eigenvalue is an eigenvalue of $Q$.  Conversely, $Q$ is a
positive $2$-by-$2$ matrix, and lifting its positive Perron eigenvector to
a vector constant on the two classes produces a positive eigenvector of
$CC^{\mathsf T}$.  Therefore $Q$ and $CC^{\mathsf T}$ have the same
largest eigenvalue.

A direct calculation gives
\[
 \operatorname{tr}Q=pq-2k+1,
\]
and
\begin{align*}
 \det Q
 &=(p-k)\bigl(q(kq-2k+1)-k(q-1)^2\bigr)\\
 &=(p-k)(q-k).
\end{align*}
Thus the largest eigenvalue of $Q$ is the larger root of
\begin{equation}\label{eq:quadratic}
 z^2-(pq-2k+1)z+(p-k)(q-k)=0,
\end{equation}
which proves \eqref{eq:spectrum-general}.

It remains to balance the parts.  Write $s=pq$.  Since $p+q=n$,
\[
 (p-k)(q-k)=s-kn+k^2.
\]
On the real interval spanned by the feasible values of $pq$, let $\mu(s)$
be the larger root of
\begin{equation}\label{eq:mu-s}
 \mu^2-(s-2k+1)\mu+s-kn+k^2=0.
\end{equation}
If $f_s(z)$ denotes the polynomial on the left of \eqref{eq:mu-s}, then
\[
 f_s(1)=k(k+2-n)<0,
\]
because $p,q>k$ implies $n\geq2k+2$.  Since $f_s$ is monic, it therefore
has two distinct real roots, one smaller than $1$ and one larger than $1$.
In particular, $\mu(s)>1$ throughout the relevant interval.
Differentiating \eqref{eq:mu-s} with respect to $s$ gives
\[
 2\mu\mu'-\mu-(s-2k+1)\mu'+1=0,
\]
and hence
\[
 \mu'(s)
 =\frac{\mu(s)-1}{2\mu(s)-(s-2k+1)}.
\]
Let $\eta(s)$ denote the smaller root of \eqref{eq:mu-s}.  By Vieta's
formula,
\[
 \mu(s)+\eta(s)=s-2k+1.
\]
Consequently, the denominator in the expression for $\mu'(s)$ is
\[
 2\mu(s)-(s-2k+1)
 =2\mu(s)-\bigl(\mu(s)+\eta(s)\bigr)
 =\mu(s)-\eta(s).
\]
If
\[
 \Delta(s)=(s-2k+1)^2-4(s-kn+k^2)
\]
is the discriminant of \eqref{eq:mu-s}, then the quadratic formula gives
\[
 \mu(s)=\frac{s-2k+1+\sqrt{\Delta(s)}}2,
 \qquad
 \eta(s)=\frac{s-2k+1-\sqrt{\Delta(s)}}2.
\]
Thus $\mu(s)-\eta(s)=\sqrt{\Delta(s)}>0$.  The numerator
$\mu(s)-1$ is also positive, so $\mu'(s)>0$.  Finally,
\[
 pq=p(n-p)\leq\floor{n^2/4}=ab,
\]
with equality exactly when the two integer part sizes differ by at most
one, that is, when $\{p,q\}=\{a,b\}$.  Strict increase of $\mu$ proves the
balancing assertion and its equality condition.
\end{proof}

For later use we locate the two roots of \eqref{eq:quadratic} in the
balanced case.

\begin{lemma}\label{lem:root-location}
Let $k\geq1$ and $n\geq2k+2$, and set
\[
 a=\ceil{n/2},\qquad b=\floor{n/2},\qquad s=ab.
\]
If $\mu=\rhoA(B_{n,k})^2$, then
\begin{equation}\label{eq:root-window}
 s-2k<\mu<s-2k+1.
\end{equation}
\end{lemma}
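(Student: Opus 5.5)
By Proposition~\ref{prop:spectrum} with $p=a$, $q=b$ (legitimate since $n\geq2k+2$ gives $a\geq b\geq k+1>k$), $\mu$ is the larger root of the monic quadratic
\[
 f(z)=z^2-(s-2k+1)z+(a-k)(b-k),
\]
where $s=ab$. The plan is to locate this root by evaluating $f$ at the two endpoints $z=s-2k$ and $z=s-2k+1$. A monic quadratic is positive to the right of its larger root and negative strictly between its roots. So it suffices to show three things: $f(s-2k+1)>0$, $f(s-2k)<0$, and that the larger root exceeds $s-2k+1$ is impossible. The last point is automatic once the first two hold and the vertex lies below $s-2k+1$.

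First, $f(s-2k+1)=(s-2k+1)^2-(s-2k+1)^2+(a-k)(b-k)=(a-k)(b-k)>0$. This holds because $a,b>k$.

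Second, $f(s-2k)=(s-2k)^2-(s-2k+1)(s-2k)+(a-k)(b-k)=-(s-2k)+(a-k)(b-k)$. Expanding, $(a-k)(b-k)=s-kn+k^2$, so
\[
 f(s-2k)=2k-kn+k^2=k(k+2-n).
\]
This is negative since $n\geq2k+2>k+2$. The computation is the same as the evaluation $f_s(1)$ in the proof of Proposition~\ref{prop:spectrum}, shifted.

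From $f(s-2k)<0<f(s-2k+1)$, exactly one root of $f$ lies in the open interval $(s-2k,s-2k+1)$. The other root is smaller than $s-2k$, because $f$ is negative at $s-2k$ and tends to $+\infty$ as $z\to-\infty$. Hence the root in the interval is the larger root $\mu$, which is the statement \eqref{eq:root-window}. The only step that needs care is confirming that the root in the window is the larger one; the sign pattern just described settles it, so there is no real obstacle.
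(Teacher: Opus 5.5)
Your proof is correct and is essentially the paper's argument. Since $f(z)=f(s-2k+1-z)$, your values $f(s-2k+1)=(a-k)(b-k)>0$ and $f(s-2k)=k(k+2-n)<0$ are exactly the paper's $f(0)$ and $f(1)$. You read off the larger root directly from the sign pattern, whereas the paper locates the smaller root $\eta\in(0,1)$ and then uses Vieta, $\mu=s-2k+1-\eta$; your aside about the vertex is unnecessary, since the final sign-pattern argument already settles which root lies in the window.
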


\begin{proof}
By Proposition~\ref{prop:spectrum}, $\mu$ is one of the two roots of
\[
 f(z)=z^2-(s-2k+1)z+s-kn+k^2.
\]
The assumption $n\geq2k+2$ gives $a,b\geq k+1$.  Consequently,
\[
 f(0)=s-kn+k^2=(a-k)(b-k)>0,
\]
whereas direct substitution gives
\[
 f(1)=k(k+2-n)\leq-k^2<0.
\]
The intermediate value theorem therefore gives a root
$\eta\in(0,1)$.  Since $f$ is monic, $f(z)\to+\infty$ as
$z\to+\infty$.  Together with $f(1)<0$, this gives a second root in
$(1,\infty)$.  A quadratic has only two roots, so the root in
$(1,\infty)$ is necessarily the larger one.  Proposition~\ref{prop:spectrum}
identifies that larger root with
$\mu=\rhoA(B_{n,k})^2$.  In particular,
\[
 0<\eta<1<\mu.
\]
By Vieta's formula, the sum of the two roots is $s-2k+1$.  Hence
\[
 \mu=s-2k+1-\eta,
\]
and $0<\eta<1$ immediately yields
\[
 s-2k<\mu<s-2k+1.
\]
This is \eqref{eq:root-window}.
\end{proof}

\section{The higher-connectivity spectral theorem}

We can now state and prove the main result.

\begin{theorem}\label{thm:main}
Let $k\geq1$ and $n\geq4k+2$.  If $G$ is a triangle-free graph of order
$n$ such that $\kappa(\comp G)\geq k$, then
\begin{equation}\label{eq:main-bound}
 \rhoA(G)\leq\rhoA(B_{n,k}),
\end{equation}
where
\begin{equation}\label{eq:main-value}
 \rhoA(B_{n,k})^2
 =\frac{ab-2k+1+
 \sqrt{(ab-2k+1)^2-4(a-k)(b-k)}}{2}.
\end{equation}
Equality holds in \eqref{eq:main-bound} if and only if
$G\cong B_{n,k}$.
\end{theorem}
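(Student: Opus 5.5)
Since $n\geq4k+2$ gives $a,b\geq 2k+1>k$, Proposition~\ref{prop:spectrum} applies to $B_{n,k}$ and already yields \eqref{eq:main-value}. By Proposition~\ref{prop:connectivity}, $B_{n,k}$ is itself admissible. So it suffices to take an admissible $G$ with $\rhoA(G)\geq\rhoA(B_{n,k})$ and show $G\cong B_{n,k}$. Set $s=ab$ and $\mu=\rhoA(B_{n,k})^2$.

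\emph{Step 1: $G$ is bipartite.} By Lemma~\ref{lem:nosal} and Lemma~\ref{lem:root-location},
\[
 e(G)\geq\rhoA(G)^2\geq\mu>s-2k.
\]
Hence $e(G)\geq s-2k+1$. To apply the threshold \eqref{eq:nonbip-threshold} I need $s-2k+1\geq s-b+2$, that is, $b\geq 2k+1$. This holds because $n\geq4k+2$, and this is exactly where the order hypothesis enters. Lemma~\ref{lem:density}(ii) then forces $G$ to be bipartite.

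\emph{Step 2: structure of a bipartite admissible $G$.} Fix a bipartition $(P,Q)$ of $G$ with $|P|=p$, $|Q|=q$, and let $F$ be the bipartite graph of missing cross pairs. Then $\comp G$ consists of the cliques on $P$ and on $Q$, joined by the edges of $F$. Any vertex cover $T$ of $F$ kills all cross edges of $\comp G$. If $P\not\subseteq T$ and $Q\not\subseteq T$, then $T$ is a cut, so $\kappa(\comp G)\leq|T|$. Otherwise, say $P\subseteq T$, and then $\kappa(\comp G)\leq n-1$ trivially bounds nothing useful. Instead, in this case note that $\kappa(\comp G)\leq\min\deg\leq q-1+\deg_F(v)$, which I will handle separately.

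In the generic case, \eqref{eq:konig} gives $\nu(F)=\tau(F)\geq k$. Also $p,q\geq k+1$, since $\kappa(\comp G)\geq k$ requires each side to have a vertex outside a $k$-matching cover. Hence $F$ contains a matching $M_k$, so $G\subseteq B_{p,q}^{(k)}$. Lemma~\ref{lem:pf} (the graph $B_{p,q}^{(k)}$ is connected) together with Proposition~\ref{prop:spectrum} then gives
\[
 \rhoA(G)\leq\rhoA\bigl(B_{p,q}^{(k)}\bigr)\leq\rhoA(B_{n,k}),
\]
with equality only if $G=B_{p,q}^{(k)}$ and $\{p,q\}=\{a,b\}$.

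\emph{Step 3: the degenerate cover case.} This is the step I expect to be the main obstacle: covers of size $\geq k$ containing a whole side, or very unbalanced sides with $\min(p,q)\leq k$. I will dispatch it by edge counting. If $p\leq k$ (or $q\leq k$), then $e(G)\leq pq\leq k(n-k)$. This is below $s-2k+1$ when $n\geq4k+2$, contradicting Step~1.

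If instead $\tau(F)<k$, choose a minimum cover $T$ with $|T|<k$. Since $p,q\geq k+1>|T|$, the cover $T$ cannot contain a whole side, so $T$ is a genuine cut of size less than $k$, contradicting $\kappa(\comp G)\geq k$. So this case is impossible, and $\nu(F)\geq k$ follows in all remaining cases.

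The care needed is in checking that the side sizes $p,q\geq k+1$ really follow from the edge bound of Step~1 before applying \eqref{eq:konig}. After that, the uniqueness bookkeeping through Lemma~\ref{lem:pf} and Proposition~\ref{prop:spectrum} is routine.
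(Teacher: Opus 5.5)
Your proof is correct and is essentially the paper's argument. Nosal's inequality and Lemma~\ref{lem:root-location} give $e(G)\geq s-2k+1$, the non-bipartite edge threshold then forces bipartiteness, edge counting gives $p,q>k$, a vertex cover of $F$ of size less than $k$ would be a cut of $\comp G$, and K\H{o}nig's theorem together with Lemma~\ref{lem:pf} and Proposition~\ref{prop:spectrum} finishes. One justification needs correcting: in Step~2 you attribute $p,q\geq k+1$ to $\kappa(\comp G)\geq k$, which is false in general, since $q=k$ is compatible with that hypothesis. Your Step~3 edge count supplies the bound correctly, via $s-k(n-k)=\floor{(n-2k)^2/4}\geq(k+1)^2>2k-1$.
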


\begin{proof}
The assumption $n\geq4k+2$ gives $a,b>k$, so $B_{n,k}$ is well defined;
it is triangle-free, and Proposition~\ref{prop:connectivity} shows that
its complement is $k$-connected.  Thus the admissible family is nonempty.
Because there are only finitely many graphs of order $n$, we may choose
an admissible graph $G$ with maximum spectral radius.  Write
\[
 m=\e(G),\qquad s=ab=\floor{\frac{n^2}{4}}.
\]
Maximality gives $\rhoA(G)\geq\rhoA(B_{n,k})$.  The spectral Mantel
inequality in Lemma~\ref{lem:nosal} gives $m\geq\rhoA(G)^2$, while
Lemma~\ref{lem:root-location} gives
$\rhoA(B_{n,k})^2>s-2k$.  Combining these estimates yields
\[
 m\geq\rhoA(G)^2
   \geq\rhoA(B_{n,k})^2>s-2k.
\]
Since $m$ is an integer,
\begin{equation}\label{eq:m-lower}
 m\geq s-2k+1.
\end{equation}
Our order hypothesis gives $b=\floor{n/2}\geq2k+1$, and hence
\[
 m\geq s-2k+1\geq s-b+2.
\]
On the other hand, Lemma~\ref{lem:density}(ii) and
\eqref{eq:nonbip-threshold} say that every non-bipartite triangle-free
graph has at most $s-b+1$ edges.  The last display rules this out, so $G$
must be bipartite.

Fix a bipartition $P\cup Q$ of $G$, with $|P|=p$, $|Q|=q$, and
$p+q=n$.  We next show
\begin{equation}\label{eq:parts-large}
 p,q>k.
\end{equation}
Indeed, a bipartite graph with these part sizes has at most $pq$ edges,
so $m\leq pq$.  Suppose, by symmetry, that $q\leq k$.  Since
 $k<n/2$ under the order hypothesis and the function $x(n-x)$ is
 increasing on $[0,n/2]$, we obtain
 $pq=q(n-q)\leq k(n-k)$.  On the other hand, since $k(n-k)$ is an
 integer,
 \begin{align*}
 s-2k+1-k(n-k)
 &=\floor{\frac{n^2}{4}-k(n-k)}-2k+1\\
 &=\floor{\frac{(n-2k)^2}{4}}-2k+1\\
 &\geq(k+1)^2-2k+1=k^2+2>0,
 \end{align*}
Here $n-2k\geq2k+2$, so
$\floor{(n-2k)^2/4}\geq(k+1)^2$, which proves the second inequality.
Thus $k(n-k)<s-2k+1$, contradicting
$pq\geq m\geq s-2k+1$.  This proves \eqref{eq:parts-large}.

Let $F$ be the bipartite graph on $P\cup Q$ whose edges are precisely the
missing cross-edges of $G$:
\[
 \E(F)=\{xy:x\in P,\ y\in Q,\ xy\notin\E(G)\}.
\]
The cross-edges of $\comp G$ are exactly the edges of $F$, while $P$ and
$Q$ each induce a clique in $\comp G$.  We claim that $\tau(F)\geq k$.
Otherwise, $F$ has a vertex cover $S$ with $|S|\leq k-1$.  By
\eqref{eq:parts-large}, both $P\setminus S$ and $Q\setminus S$ are
nonempty.  Since $S$ meets every edge of $F$, there is no cross-edge of
$\comp G-S$ between these two nonempty sets.  They still induce cliques,
so they are distinct components of $\comp G-S$.  This contradicts the
fact that deleting fewer than $k$ vertices cannot disconnect $\comp G$.
Therefore $\tau(F)\geq k$.

By K\H{o}nig's theorem \eqref{eq:konig},
$\nu(F)=\tau(F)\geq k$, so $F$ contains a matching $M$ of size $k$.
Every edge of $M$ is absent from $G$, and all edges of $G$ are cross-edges
between $P$ and $Q$.  Consequently, as spanning graphs,
\[
 G\subseteq K_{p,q}-M\cong B_{p,q}^{(k)}.
\]
The graph $B_{p,q}^{(k)}$ is connected: because $p,q>k$, each part has an
unmatched vertex, and those two vertices join all vertices of the opposite
part.  Lemma~\ref{lem:pf} therefore gives the first inequality below, and
Proposition~\ref{prop:spectrum} gives the second:
\begin{equation}\label{eq:equality-chain}
 \rhoA(G)
 \leq\rhoA\bigl(B_{p,q}^{(k)}\bigr)
 \leq\rhoA(B_{n,k}).
\end{equation}
The leftmost term is at least the rightmost term by the maximal choice of
$G$ and the admissibility of $B_{n,k}$.  Hence equality holds throughout
\eqref{eq:equality-chain}.  If $G$ were a proper spanning subgraph of
$B_{p,q}^{(k)}$, the strict part of Lemma~\ref{lem:pf} would make the first
inequality strict.  Thus $G=B_{p,q}^{(k)}$.  Likewise, the strict balancing
statement in Proposition~\ref{prop:spectrum} makes the second inequality
an equality only when $\{p,q\}=\{a,b\}$.  Hence $G\cong B_{n,k}$.

We have proved that every spectral maximizer is $B_{n,k}$.  It follows
that every admissible graph satisfies \eqref{eq:main-bound}, and the
preceding argument also proves the asserted unique equality case.
\end{proof}

For $k=1$, the order restriction in Theorem~\ref{thm:main} can be removed
completely.

\begin{corollary}\label{cor:k-one}
Let $n\geq3$, and let $G$ be a triangle-free graph of order $n$ whose
complement is connected.  Then
\[
 \rhoA(G)\leq
 \rhoA\left(K_{\ceil{n/2},\floor{n/2}}-e\right),
\]
with equality if and only if
$G\cong K_{\ceil{n/2},\floor{n/2}}-e$.
\end{corollary}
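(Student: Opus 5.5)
The plan is to reuse the proof of Theorem~\ref{thm:main} with $k=1$, checking exactly where the hypothesis $n\geq4k+2=6$ was used, and to treat the orders $n=3,4,5$ separately. For $n\geq6$ the statement is Theorem~\ref{thm:main} with $k=1$, because $\kappa(\comp G)\geq1$ means exactly that $\comp G$ is connected. The order hypothesis entered that proof in three places:
\begin{itemize}
\item[(a)] to guarantee $a,b>1$, so that Propositions~\ref{prop:connectivity} and~\ref{prop:spectrum} apply;
\item[(b)] to get $s-1\geq s-b+2$, i.e.\ $b\geq3$, which forces bipartiteness;
\item[(c)] to get $p,q>1$, via $n-1<s-1$.
\end{itemize}
The Perron--Frobenius and K\H{o}nig steps, and the balancing step, need only $p,q>1$ and $n\geq4$.

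For $n=3$, I will enumerate directly. If $G$ has two edges it is $P_3$, whose complement $K_2\cup K_1$ is disconnected. So $G$ is either empty or $K_2\cup K_1=K_{2,1}-e$, and the latter is the unique maximizer with $\rhoA=1$.

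For $n=4$, every triangle-free graph on four vertices is bipartite, since an odd cycle would have to be $C_3$. Fix a bipartition with part sizes $p\geq q$. If $q=0$, then $G$ is empty. If $q=1$, then $G\subseteq K_{1,3}$; connectivity of $\comp G$ forbids $G=K_{1,3}$, so $G$ is a subgraph of $K_{1,2}\cup K_1$ and $\rhoA(G)\leq\sqrt2$. This is strictly less than $\rhoA(P_4)=(1+\sqrt5)/2$, and $P_4=K_{2,2}-e=B_{4,1}$. If $p=q=2$, the vertex-cover and K\H{o}nig argument from the proof of Theorem~\ref{thm:main} gives a missing cross-edge. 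Hence $G\subseteq K_{2,2}-e$, and Lemma~\ref{lem:pf} yields the bound together with uniqueness.

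For $n=5$, I will follow the main proof. Lemma~\ref{lem:root-location} and Lemma~\ref{lem:nosal} still give $m\geq s-1=5$. Step (c) still works here, since $n-1=4<5$. Only the bipartiteness step fails, and I expect it to be the main obstacle. By Lemma~\ref{lem:density}(ii), a non-bipartite triangle-free graph on five vertices has at most $5$ edges. Such a graph must contain an odd cycle of length at least $5$, hence a $C_5$, so it is exactly $C_5$. This is a genuine competitor, because $\comp{C_5}\cong C_5$ is connected. It must be excluded numerically: $\rhoA(C_5)^2=4$, whereas \eqref{eq:spectrum-general} gives
\[
\rhoA(B_{5,1})^2=\frac{5+\sqrt{17}}{2}>4.
\]
So a maximizer is bipartite, and the remainder of the proof of Theorem~\ref{thm:main} applies verbatim, giving the bound and the unique equality case $G\cong K_{3,2}-e$.
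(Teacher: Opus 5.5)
Your proposal is correct and takes essentially the same route as the paper. Both arguments use Theorem~\ref{thm:main} for $n\geq6$, direct enumeration at $n=3$, and the same key facts at $n=4,5$: $K_{2,2}$ and $K_{1,3}$ are excluded because their complements are disconnected, the remaining $n=4$ competitors satisfy the $\sqrt2$ bound, and $C_5$ is excluded by $\rhoA(C_5)^2=4<(5+\sqrt{17})/2$. The difference is only organizational: you re-run the main proof and patch its failing steps, whereas the paper classifies the small cases by edge count via Lemma~\ref{lem:density}(i) and Lemma~\ref{lem:nosal}, which for $k=1$ does the same work as your K\H{o}nig and Perron--Frobenius containment.
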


\begin{proof}
For $k=1$, the graph $B_{n,1}$ is precisely
\[
 B_{n,1}=K_{\ceil{n/2},\floor{n/2}}-e.
\]
If $n\geq6=4\cdot1+2$, Theorem~\ref{thm:main} applies directly and gives
both the asserted inequality and its equality case.  It remains to check
$n=3,4,5$.

Suppose first that $n=3$.  If a triangle-free graph on three vertices has
at least two edges, then it is $P_3$: the only three-edge graph is the
forbidden triangle $K_3$.  However,
$\comp{P_3}=K_2\cup K_1$ is disconnected.  Therefore every admissible
graph has at most one edge.  A graph with no edges has spectral radius
zero, whereas every one-edge graph is isomorphic to $K_2\cup K_1$ and has
adjacency spectrum $\{1,0,-1\}$.  Moreover,
\[
 K_2\cup K_1\cong K_{2,1}-e=B_{3,1},
\]
and its complement is $P_3$, which is connected.  Thus $B_{3,1}$ is the
unique maximizer and its spectral radius is $1$.

Now let $n=4$.  By Lemma~\ref{lem:density}(i), $\e(G)\leq4$, with equality
only for $K_{2,2}$.  Since
$\comp{K_{2,2}}=2K_2$ is disconnected, an admissible graph must satisfy
$\e(G)\leq3$.  If $\e(G)\leq2$, Lemma~\ref{lem:nosal} gives
\[
 \rhoA(G)\leq\sqrt{\e(G)}\leq\sqrt2.
\] 
It remains to classify the case $\e(G)=3$.  A triangle-free graph on four
vertices with three edges cannot contain a cycle: a cycle with three
edges is a triangle, while a $4$-cycle requires four edges.  Hence it is
a forest.  Since a forest of order four and size three is connected, it
is a tree.  Up to isomorphism, the two trees on four vertices are $P_4$
and $K_{1,3}$.  The center of $K_{1,3}$ is isolated in its complement, so
$\comp{K_{1,3}}$ is disconnected.  On the other hand, $P_4$ is
self-complementary, and hence its complement is connected.  Thus the only
admissible three-edge graph is
\[
 P_4\cong K_{2,2}-e=B_{4,1}.
\]
The characteristic polynomial of $P_4$ is
\[
 \lambda^4-3\lambda^2+1.
\]
Its largest eigenvalue therefore satisfies
\[
 \rhoA(P_4)^2=\frac{3+\sqrt5}{2},
 \qquad
 \rhoA(P_4)=\frac{1+\sqrt5}{2}>\sqrt2.
\]
Consequently $P_4$ strictly dominates every graph with at most two edges
and is the unique maximizer for $n=4$.

Finally, let $n=5$.  Lemma~\ref{lem:density}(i) gives
$\e(G)\leq6$, with equality only for $K_{2,3}$.  Its complement is
$K_2\cup K_3$, so connectedness of $\comp G$ excludes equality and yields
$\e(G)\leq5$.  If $\e(G)\leq4$, then Lemma~\ref{lem:nosal} gives
\[
 \rhoA(G)\leq\sqrt{\e(G)}\leq2.
\]
On the other hand, $B_{5,1}=K_{2,3}-e$ is admissible: its complement
consists of a clique $K_2$ and a clique $K_3$ joined by the single
cross-edge corresponding to $e$.  Proposition~\ref{prop:spectrum}, applied
with $(p,q,k)=(2,3,1)$, gives
\[
 \rhoA(B_{5,1})^2=\frac{5+\sqrt{17}}2>4.
\]
Thus $\rhoA(B_{5,1})>2$, whereas every admissible graph with at most four
edges has spectral radius at most $2$.  Since every admissible graph has at
most five edges, an extremal graph must have exactly five edges.  We
therefore need only consider graphs with five edges.

Assume first that such a graph $G$ is non-bipartite.  It contains an odd
cycle.  Since $G$ is triangle-free and has only five vertices, a shortest
odd cycle has length five and uses every vertex.  The five edges of this
cycle already account for all the edges of $G$, so $G=C_5$.  As $C_5$ is
$2$-regular, the all-ones vector is an eigenvector with eigenvalue $2$;
the maximum-degree bound gives $\rhoA(C_5)\leq2$, and hence
$\rhoA(C_5)=2$.

Assume instead that $G$ is bipartite.  A bipartition of type $(1,4)$
allows at most four edges, so the part sizes must be $2$ and $3$.  There
are six possible cross-edges, and exactly five are present.  Hence,
uniquely up to isomorphism,
\[
 G=K_{2,3}-e=B_{5,1}.
\]
The preceding calculation shows that this graph has spectral radius
strictly greater than $2$, whereas the non-bipartite candidate $C_5$ has
spectral radius $2$.  Therefore $B_{5,1}$ is the unique maximizer.
This completes all three exceptional orders and proves the corollary.
\end{proof}

We next determine the bipartite subproblem without any large-order
assumption.  This result isolates the only remaining difficulty in the
unrestricted problem: comparison with non-bipartite triangle-free graphs.

\begin{proposition}\label{prop:bipartite-all-orders}
Let $k\geq1$ and $n\geq2k+1$.  If $G$ is a bipartite graph of order $n$
such that $\kappa(\comp G)\geq k$, then
\[
 \rhoA(G)\leq\rhoA(B_{n,k}),
\]
with equality if and only if $G\cong B_{n,k}$.
\end{proposition}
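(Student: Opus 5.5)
The plan is to reproduce the second half of the proof of Theorem~\ref{thm:main}, which never used non-bipartite graphs, and to replace the edge-count reduction with a direct treatment of unbalanced bipartitions. Fix a bipartition $P\cup Q$ of $G$ with $|P|=p\geq|Q|=q$. First we note that $B_{n,k}$ is admissible for every $n\geq2k+1$. When $n\geq2k+2$ this is Proposition~\ref{prop:connectivity}. When $n=2k+1$ we have $b=k$, and the lower-bound half of that proof still works: it only needs $P\setminus S$ and $Q\setminus S$ to be nonempty for $|S|\leq k-1$.

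\textbf{Case $q\geq k$.} Deleting a set $S$ with $|S|\leq k-1$ leaves both $P\setminus S$ and $Q\setminus S$ nonempty. So the vertex-cover argument from the proof of Theorem~\ref{thm:main} gives $\tau(F)\geq k$ for the graph $F$ of missing cross-edges. K\H{o}nig's theorem \eqref{eq:konig} then gives a $k$-matching $M\subseteq F$, so $G\subseteq K_{p,q}-M=B^{(k)}_{p,q}$ as spanning graphs.

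Next we extend Proposition~\ref{prop:spectrum} to the boundary case $q=k<p$. The block computation \eqref{eq:CCT-block} is unchanged, and the quotient argument only needs $CC^{\mathsf T}$ to be irreducible. Irreducibility still holds when $p-k>0$ and $q-1>0$, because the off-diagonal blocks are then positive, even if $q-2=0$. The degenerate instance $q=k=1$ must be treated separately: $B^{(1)}_{p,1}$ is a star $K_{1,p-1}$ plus an isolated vertex, with $\rho^2=p-1$, and formula \eqref{eq:spectrum-general} gives the same value. With this extension, monotonicity in $pq$ yields $\rho(B^{(k)}_{p,q})\leq\rho(B_{n,k})$, with equality only for $\{p,q\}=\{a,b\}$.

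For uniqueness we use Lemma~\ref{lem:pf}, which requires $B^{(k)}_{p,q}$ to be connected. This holds whenever $p>k$ and $q\geq k$ with $q\geq2$, since each vertex of $Q$ then has a neighbour among the unmatched vertices of $P$, and vice versa. In the lone exception $q=k=1$ we argue directly: a proper subgraph of a star plus an isolated vertex has strictly smaller spectral radius, because it is a smaller star plus isolated vertices.

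\textbf{Case $q\leq k-1$ (the main obstacle).} Here a small side can be covered, so K\H{o}nig gives nothing and a different bound is needed. Since $\kappa(\comp G)\geq k$, every vertex has degree at least $k$ in $\comp G$, hence $\deg_G(v)\leq n-1-k$ for all $v$. For the bipartite matrix $C$ (a $q\times p$ block), the row sums are at most $n-1-k$ and the column sums are at most $q\leq k-1$. Hence
\[
 \rho(G)^2=\rho(CC^{\mathsf T})\leq\|C\|_1\|C\|_\infty\leq(k-1)(n-1-k).
\]
The key inequality to verify is $(k-1)(n-1-k)<\rho(B_{n,k})^2$, which also gives strictness and so excludes equality in this case. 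For $n\geq2k+2$ we plan to use Lemma~\ref{lem:root-location}, which gives $\rho(B_{n,k})^2>ab-2k$, and then check $ab-2k\geq(k-1)(n-1-k)$ by writing $n=2k+2+t$ with $t\geq0$ and expanding; the difference should grow like $t^2/4$ plus a positive constant. For $n=2k+1$ we compute directly from the extended formula with $(p,q)=(k+1,k)$ and check that $\rho^2>k^2-k$. This final comparison, together with the small-$k$ degenerate cases, is where I expect the arithmetic to need care.
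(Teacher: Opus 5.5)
Your proposal is correct and follows essentially the paper's proof. For $q\ge k$ you use K\H{o}nig containment in $B^{(k)}_{p,q}$. For $q<k$ you use the degree bound $d_G(y)\le n-k-1$; the paper's cut $S_y$ is exactly $N_{\comp G}(y)$, so this is the same bound. You then compare $(k-1)(n-k-1)$ against Lemma~\ref{lem:root-location}, or against the exact value $k^2-k+1$ when $n=2k+1$. Your two substitutions are harmless: extending Proposition~\ref{prop:spectrum} to $q=k$ replaces the paper's direct computation $C^{\mathsf T}C=(p-2)J_k+I_k$, and the bound $\rho(CC^{\mathsf T})\le\|C\|_1\|C\|_\infty$ replaces Nosal's inequality together with $e(G)\le q(n-k-1)$.

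One wording fix is needed. When $q=k$ there are no unmatched vertices in $Q$, so ``and vice versa'' does not justify connectivity of $B^{(k)}_{p,k}$. Argue instead that every matched $x_i$ has a neighbour $y_j$ with $j\ne i$ because $q\ge2$, and that all of $Q$ is adjacent to an unmatched vertex of $P$.
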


\begin{proof}
Write $P\cup Q$ for a bipartition of $G$, where
\[
 |P|=p\geq q=|Q|,
 \qquad p+q=n,
\]
and let $F$ be the bipartite graph on $P\cup Q$ whose edges are precisely
the missing cross-edges of $G$.  Thus
\[
 \E(F)=\{xy:x\in P,\ y\in Q,\ xy\notin\E(G)\}.
\]

Suppose first that $q\geq k$.  We claim that $\tau(F)\geq k$.  Otherwise,
$F$ has a vertex cover $S$ with $|S|\leq k-1$.  Since $p,q\geq k$, both
$P\setminus S$ and $Q\setminus S$ are nonempty.  Because $S$ meets every
edge of $F$, there is no edge of $F$ between these two sets and hence no
cross-edge of $\comp G-S$ between them.  Each set still induces a clique
in $\comp G-S$, so $\comp G-S$ is disconnected.  This contradicts
$\kappa(\comp G)\geq k$, and proves the claim.  By K\H{o}nig's theorem
\eqref{eq:konig}, $F$ contains a matching $M_k$ of size $k$, and therefore
\begin{equation}\label{eq:bipartite-containment}
 G\subseteq K_{p,q}-M_k\cong B_{p,q}^{(k)}
\end{equation}
as spanning graphs.

If $q>k$, then also $p>k$.  The graph on the right of
\eqref{eq:bipartite-containment} is connected, so Lemma~\ref{lem:pf} and
Proposition~\ref{prop:spectrum} give
\[
 \rhoA(G)\leq\rhoA(B_{p,q}^{(k)})\leq\rhoA(B_{n,k}).
\]
Both inequalities are strict unless, respectively, $G=B_{p,q}^{(k)}$ and
$\{p,q\}=\{\ceil{n/2},\floor{n/2}\}$.  This gives the desired inequality
and equality condition when $q>k$.

It remains within the case $q\geq k$ to consider $q=k$.  Label the deleted
matching by
\[
 M_k=\{x_i y_i:1\leq i\leq k\},
\]
where $x_i\in P$ and $y_i\in Q$.  Order the vertices of $P$ as
\[
 x_1,\ldots,x_k,x_{k+1},\ldots,x_p
\]
and those of $Q$ as $y_1,\ldots,y_k$.  With respect to these orderings, the $p$-by-$k$ bipartite adjacency matrix of $B_{p,k}^{(k)}$ is
\begin{equation}\label{eq:C-boundary}
 C=
 \begin{pmatrix}
  0&1&\cdots&1\\
  1&0&\cdots&1\\
  \vdots&\vdots&\ddots&\vdots\\
  1&1&\cdots&0\\
  1&1&\cdots&1\\
  \vdots&\vdots&&\vdots\\
  1&1&\cdots&1
 \end{pmatrix}
 =
 \begin{pmatrix}
  J_k-I_k\\
  J_{p-k,k}
 \end{pmatrix}.
\end{equation}
The first $k$ rows correspond to the matched vertices
$x_1,\ldots,x_k$.  In row $i$, the unique zero occurs in column $i$
because $x_i y_i$ is the deleted matching edge.  The remaining $p-k$
vertices of $P$ are adjacent to every vertex of $Q$, so the last $p-k$
rows are all-one rows.

Every column of $C$ contains exactly one zero and therefore has squared
norm $p-1$.  Two distinct columns have their zeros in different rows, so
they are simultaneously equal to one in exactly $p-2$ rows.  Their scalar
product is consequently $p-2$.  It follows that
\[
 C^{\mathsf T}C=
 \begin{pmatrix}
  p-1&p-2&\cdots&p-2\\
  p-2&p-1&\cdots&p-2\\
  \vdots&\vdots&\ddots&\vdots\\
  p-2&p-2&\cdots&p-1
 \end{pmatrix}
 =(p-2)J_k+I_k.
\]
The all-ones vector is an eigenvector of this matrix with eigenvalue
\[
 (p-2)k+1=pk-2k+1.
\]
All vectors orthogonal to the all-ones vector have eigenvalue $1$.
Therefore
\[
\rhoA\bigl(B_{p,k}^{(k)}\bigr)^2
 =\rho(C^{\mathsf T}C)=pk-2k+1.
\]
When $n=2k+1$, the part sizes $p=k+1$ and $q=k$ are already balanced, so
$B_{p,k}^{(k)}=B_{n,k}$.  When $n\geq2k+2$, put
$a=\ceil{n/2}$ and $b=\floor{n/2}$.  Since $a,b\geq k+1$,
\[
 ab-pk=ab-k(n-k)=(a-k)(b-k)\geq1.
\]
Lemma~\ref{lem:root-location} now yields
\[
 \rhoA(B_{n,k})^2>ab-2k
 \geq pk-2k+1=\rhoA(B_{p,k}^{(k)})^2.
\]
Thus equality is possible only at the balanced choice.  For $k\geq2$,
$B_{p,k}^{(k)}$ is connected.  Indeed, the unmatched vertex
$x_{k+1}\in P$ is adjacent to every vertex of $Q$, and each matched vertex
$x_i$ is adjacent to at least one vertex of $Q$ because $k\geq2$.
Consequently every vertex lies in the component containing $x_{k+1}$.
Lemma~\ref{lem:pf} therefore makes equality in
\eqref{eq:bipartite-containment} possible only when
 $G=B_{p,k}^{(k)}$.  If $k=1$, then $q=1$ and
\[
 B_{p,1}^{(1)}=K_{p,1}-e\cong K_{1,p-1}\cup K_1.
\]
Therefore
\[
 \rhoA\bigl(B_{p,1}^{(1)}\bigr)=\rhoA(K_{1,p-1})=\sqrt{p-1}.
\]
If the containment in \eqref{eq:bipartite-containment} is proper, then at
least one edge of the star has been deleted, since the isolated vertex has
no incident edges.  Every component of the resulting graph is then a star
with at most $p-2$ leaves or an isolated vertex.  Consequently,
\[
 \rhoA(G)\leq\sqrt{p-2}<\sqrt{p-1}
 =\rhoA\bigl(B_{p,1}^{(1)}\bigr).
\]
Hence equality is possible only when
$G=B_{p,1}^{(1)}$, which proves the required equality conclusion also for
$k=1$.

We finally exclude $q<k$.  Fix $y\in Q$ with $d_G(y)>0$ and define
\[
 S_y=(Q\setminus\{y\})\cup N_F(y).
\]
This is a vertex cover of $F$: an edge of $F$ not incident with $y$ has its
$Q$-endpoint in $Q\setminus\{y\}$, and an edge incident with $y$ has its
$P$-endpoint in $N_F(y)$.  Moreover, after deleting $S_y$ from $\comp G$,
the vertex $y$ remains, as does the nonempty set
\[
 P\setminus N_F(y)=N_G(y),
\]
but $y$ has no neighbor in this set in $\comp G$.  Hence $\comp G-S_y$ is
disconnected.  The connectivity hypothesis forces
\[
 |S_y|=q-1+d_F(y)\geq k,
\]
and therefore
\[
 d_G(y)=p-d_F(y)\leq p-(k-q+1)=n-k-1.
\]
The same degree bound is trivial when $d_G(y)=0$.  Summing over $Q$ gives
\begin{equation}\label{eq:small-part-edge-bound}
 \e(G)=\sum_{y\in Q}d_G(y)
 \leq q(n-k-1)
 \leq(k-1)(n-k-1)=:E.
\end{equation}

It remains to compare this number with $\rhoA(B_{n,k})^2$.  Let
$s=\floor{n^2/4}$ and $b=\floor{n/2}$.

First suppose that $n=2b+1$ and put $d=b-k\geq0$.  Then
\[
 s=\floor{\frac{(2b+1)^2}{4}}=b(b+1)
 \qquad\text{and}\qquad
 E=(k-1)(2b-k).
\]
Consequently,
\begin{align*}
 s-2k-E
 &=b(b+1)-2k-(k-1)(2b-k)\\
 &=b^2+b-2k-\bigl(2bk-k^2-2b+k\bigr)\\
 &=(b-k)^2+3(b-k)=d^2+3d.
\end{align*}
If $d=0$, then $n=2k+1$, $E=k(k-1)$, and the calculation for
$B_{k+1,k}^{(k)}$ above gives
\[
 \rhoA(B_{n,k})^2=k(k+1)-2k+1=E+1.
\]
If $d\geq1$, then $s-2k-E=d^2+3d>0$; hence
Lemma~\ref{lem:root-location} gives
\[
 \rhoA(B_{n,k})^2>s-2k>E.
\]

Now suppose that $n=2b$.  Since $n\geq2k+1$, we have $b\geq k+1$; put
$d=b-k\geq1$.  In this case,
\[
 s=b^2,
 \qquad E=(k-1)(2b-k-1),
\]
and
\begin{align*}
 s-2k-E
 &=b^2-2k-(k-1)(2b-k-1)\\
 &=b^2-2bk+k^2+2b-2k-1\\
 &=(b-k)^2+2(b-k)-1=d^2+2d-1>0.
\end{align*}
The same lemma therefore gives
$\rhoA(B_{n,k})^2>s-2k>E$.  Thus, in both parity cases, Nosal's inequality
and \eqref{eq:small-part-edge-bound} imply
\[
 \rhoA(G)^2\leq \e(G)\leq E<\rhoA(B_{n,k})^2.
\]
Thus $q<k$ cannot give equality.

For completeness, $B_{n,k}$ is admissible in the stated range.  Its
complement consists of two cliques joined by the deleted $k$-matching.
Deleting fewer than $k$ vertices leaves both cliques nonempty and leaves
at least one matching edge, so the remaining graph is connected.  This
proves $\kappa(\comp{B_{n,k}})\geq k$ and completes the proof.
\end{proof}

At the boundary order $n=2k$, the extremal value is also elementary, but
uniqueness need not hold.

\begin{proposition}\label{prop:boundary-order}
For every $k\geq2$,
\[
 \operatorname{spex}_{\kappa}(2k,K_3;k)=k-1.
\]
Among the admissible graphs, equality holds precisely for those containing
a $(k-1)$-regular connected component.  In particular,
$B_{2k,k}=K_{k,k}-M_k$ is extremal.
\end{proposition}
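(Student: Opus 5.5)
The proof rests on one structural fact: when $n=2k$ and $\kappa(\comp G)\geq k$, every vertex has small degree in $G$. The case $n=2k$ is the boundary case. Here $\kappa(\comp G)\geq k$ requires $\comp G$ to have at least $k+1$ vertices, which holds, and deleting any $k-1$ vertices must leave a connected graph. The plan has three steps: (a) show $\Delta(G)\leq k-1$, which gives $\rhoA(G)\leq k-1$; (b) characterize equality; (c) check that $B_{2k,k}$ is admissible and attains the bound.

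\emph{Degree bound.} Let $v$ be a vertex of $G$ with neighbourhood $N=N_G(v)$. In $\comp G$, the vertex $v$ is adjacent to everything outside $N\cup\{v\}$. Because $G$ is triangle-free, $N$ is independent in $G$, so $N$ is a clique in $\comp G$. Delete from $\comp G$ the set $S=V\setminus(N\cup\{v\})$, which has size $2k-1-d(v)$. In $\comp G-S$, the vertex $v$ has no neighbours, while $N$ is nonempty whenever $d(v)\geq1$. So if $d(v)\geq k$, then $|S|\leq k-1$ and $\comp G-S$ is disconnected (or, if $S=\emptyset$, $v$ is isolated in $\comp G$ itself). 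This contradicts $\kappa(\comp G)\geq k$. Hence $\Delta(G)\leq k-1$, and the standard bound $\rhoA(G)\leq\Delta(G)$ gives $\rhoA(G)\leq k-1$.

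\emph{Equality case.} Equality $\rhoA(G)=k-1$ holds exactly when some component $C$ of $G$ has $\rhoA(C)=k-1\geq\Delta(C)$. For a connected graph, $\rhoA=\Delta$ forces regularity: via the Perron vector, $\rhoA\leq$ the average of the neighbouring Perron entries, with equality only if all degrees equal $\Delta$ and $\Delta=\rhoA$. So $C$ is $(k-1)$-regular. Conversely, a $(k-1)$-regular component has spectral radius $k-1$, because the all-ones vector on it is an eigenvector with eigenvalue $k-1$. This gives the stated characterization among admissible graphs.

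\emph{Attainment.} It remains to show $B_{2k,k}$ is admissible and extremal. $B_{2k,k}=K_{k,k}-M_k$ is bipartite, hence triangle-free, and $(k-1)$-regular. For $k\geq2$ it is connected: $x_i$ reaches every $y_j$ with $j\neq i$, and $y_i$ is reached through some $x_j$ with $j\neq i$. Its complement is two $k$-cliques joined by a perfect matching. Deleting at most $k-1$ vertices leaves both cliques nonempty and leaves at least one matching edge, so the complement stays connected. This is the argument at the end of Proposition~\ref{prop:bipartite-all-orders}, which still works with $p=q=k$. Therefore $\kappa(\comp{B_{2k,k}})\geq k$, and the value $k-1$ is attained.

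The main obstacle is the degree-bound step, specifically handling $d(v)=2k-1$ (where $S=\emptyset$) within the same argument; it is immediate because $v$ is then isolated in $\comp G$. Everything else is routine.
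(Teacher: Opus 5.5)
Your proof is correct and follows essentially the same route as the paper. You bound $\Delta(G)\le k-1$ (deleting $S=N_{\overline{G}}(v)$ is just the proof of $\kappa(\overline{G})\le\delta(\overline{G})$, and triangle-freeness is not actually needed there), then use $\rho\le\Delta$ with the Perron-vector regularity argument for equality, and check admissibility of $B_{2k,k}$ through the two-cliques-joined-by-a-matching description of its complement.

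One side remark is false: $B_{4,2}=K_{2,2}-M_2\cong 2K_2$ is disconnected, because $x_1,x_2$ have no common neighbour, so $B_{2k,k}$ is connected only for $k\ge3$. This is harmless, since $(k-1)$-regularity alone gives $\rho(B_{2k,k})=k-1$ and makes every component $(k-1)$-regular, which is all you use.
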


\begin{proof}
Let $G$ be admissible and put $H=\comp G$.  The inequality
$\kappa(H)\geq k$ implies $\delta(H)\geq k$.  Since $|V(H)|=2k$, for every
vertex $v$ we have
\[
 d_G(v)=2k-1-d_H(v)\leq k-1.
\]
The standard maximum-degree bound therefore gives
$\rhoA(G)\leq\Delta(G)\leq k-1$.

The graph $B_{2k,k}$ is $(k-1)$-regular, so its spectral radius is $k-1$.
Its complement $H_0$ consists of two copies of $K_k$, say on
$\{x_1,\ldots,x_k\}$ and $\{y_1,\ldots,y_k\}$, joined by the perfect
matching $x_i y_i$, $1\leq i\leq k$.  If fewer than $k$ vertices are
deleted, both cliques retain a vertex and at least one matching edge has
both endpoints left; the surviving matching edge joins the two surviving
cliques.  Hence $\kappa(H_0)\geq k$.  Conversely, deleting
\[
 \{x_1,\ldots,x_{k-1},y_k\}
\]
leaves $x_k$ separated from $\{y_1,\ldots,y_{k-1}\}$, so
$\kappa(H_0)\leq k$.  Thus $B_{2k,k}$ is admissible and attains the upper
bound.

It remains to justify the equality description.  Every component $D$ of
$G$ has maximum degree at most $k-1$.  If $D$ is connected, then
$\rhoA(D)\leq k-1$, with equality if and only if $D$ is
$(k-1)$-regular: indeed, for a positive Perron vector, an entry of maximum
value satisfies
\[
 \rhoA(D)x_v=\sum_{u\in N_D(v)}x_u\leq d_D(v)x_v\leq(k-1)x_v,
\]
and equality forces every neighbor to have the same maximum entry;
connectedness propagates this equality to all vertices and forces every
degree to be $k-1$.  Since the spectral radius of a disconnected graph is
the maximum of the spectral radii of its components, $\rhoA(G)=k-1$ holds
exactly when one component is $(k-1)$-regular.
\end{proof}

We now turn to the first nontrivial connectivity parameter.  For $n\geq5$
let $S_n$ denote the graph obtained by subdividing one edge of
\[
 K_{r,t},\qquad
 r=\floor{(n-1)/2},\quad t=\ceil{(n-1)/2}.
\]
Thus $S_n$ has order $n$, is triangle-free, and is non-bipartite.
The following lemma combines the order-based spectral extremal theorem in
\cite{LinNingWu2021} with a direct calculation of the connectivity of the
complement of its extremal graph.

\begin{lemma}\label{lem:lnw-order}
If $n\geq5$ and $G$ is a non-bipartite triangle-free graph of order $n$,
then
\[
 \rhoA(G)\leq\rhoA(S_n),
\]
with equality if and only if $G\cong S_n$.  Moreover,
$\kappa(\comp{S_n})=2$.
\end{lemma}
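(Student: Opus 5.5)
The lemma has two parts, and I will treat them separately. The first part, the spectral bound with its equality case, I do not plan to reprove. It is exactly the order-version theorem of Lin, Ning, and Wu \cite{LinNingWu2021}: among non-bipartite triangle-free graphs of order $n$, the maximum spectral radius is attained uniquely by the graph obtained by subdividing one edge of the balanced complete bipartite graph on $n-1$ vertices. The only work is to check that their extremal graph, with its stated part sizes $\floor{(n-1)/2}$ and $\ceil{(n-1)/2}$ for the host $K_{r,t}$, coincides with $S_n$ up to isomorphism for every $n\geq5$. For $n=5$ this is $C_5$ (subdividing an edge of $K_{2,2}=C_4$), so I will also make sure their theorem covers $n=5$. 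If it does not, I will handle $n=5$ directly. There a shortest odd cycle is a $5$-cycle through all vertices, and any additional chord would create a triangle. Hence $G=C_5=S_5$, and the conclusion is trivial.

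The second part is a direct computation of $\kappa(\comp{S_n})$. I will label $S_n$ as follows. Let $R$ and $T$ be the parts of the host $K_{r,t}$, with $|R|=r\geq2$ and $|T|=t\geq2$. Let the subdivided edge be $uv$, with $u\in R$ and $v\in T$, and let $w$ be the subdivision vertex, so that $N_{S_n}(w)=\{u,v\}$. In $H=\comp{S_n}$:
\begin{itemize}
\item $R$ and $T$ are cliques;
\item the only cross edge between $R$ and $T$ is $uv$;
\item $w$ is adjacent to every vertex except $u$ and $v$.
\end{itemize}

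For the upper bound I will exhibit a $2$-cut. Deleting $\{u,w\}$ removes the unique $R$--$T$ edge together with the universal-ish vertex $w$. The remaining nonempty sets $R\setminus\{u\}$ and $T$ then have no edges between them, so $\kappa(H)\leq2$.

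For the lower bound I must show that $H$ minus any single vertex $x$ is connected. There are two cases.
\begin{itemize}
\item If $x\neq w$, then $w$ survives. It is adjacent to all of $(R\cup T)\setminus\{u,v,x\}$, and this set meets both $R$ and $T$ because $r,t\geq2$. Each surviving vertex among $u,v$ lies in a clique $R$ or $T$ that still contains a neighbour of $w$. Hence $H-x$ is connected.
\item If $x=w$, then $R$ and $T$ are cliques joined by the edge $uv$, so $H-x$ is again connected.
\end{itemize}
This gives $\kappa(H)=2$.

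The main obstacle is bookkeeping rather than mathematics. I need to confirm that the cited theorem's range and its description of the extremal graph match $S_n$ exactly, including small orders and both parities of $n$. The connectivity computation is routine.
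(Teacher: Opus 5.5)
Your approach is the same as the paper's: cite Lin--Ning--Wu for the spectral inequality and its equality case, exhibit the cut $\{u,w\}$, and check that deleting any single vertex leaves $\comp{S_n}$ connected. One step of that last check is false as written, though, and the failure occurs exactly at the small orders.

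In the case $x\neq w$ with $x\in(R\cup T)\setminus\{u,v\}$, you claim that $(R\cup T)\setminus\{u,v,x\}$ meets both $R$ and $T$. This fails when $r=2$ and $x$ is the other vertex of $R$, or when $t=2$ and $x$ is the other vertex of $T$. Since $r=\floor{(n-1)/2}=2$ for $n=5,6$ and $t=2$ for $n=5$, this case really occurs. There the clique of $u$ in $H-x$ is just $\{u\}$, which contains no neighbour of $w$. So your sentence about the surviving vertices of $\{u,v\}$ does not connect $u$ to the rest. For $n=5$, for instance, $H-x$ is a path that starts at $u$, and $u$ is attached only through $v$.

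The repair is the one the paper uses. When $x\notin\{u,v,w\}$, the edge $uv$ survives, so the two cliques $R\setminus\{x\}$ and $T\setminus\{x\}$ together form a connected subgraph. Moreover, $w$ has a neighbour there, because $|(R\cup T)\setminus\{u,v,x\}|=n-4\geq1$. Your treatment of $x\in\{u,v\}$ and of $x=w$ is correct as written. With this fix your argument is complete and matches the paper's, including the direct $C_5$ fallback for $n=5$.
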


\begin{proof}
The spectral inequality and its equality case are the order-based extremal
theorem of Lin, Ning, and Wu \cite{LinNingWu2021}.  We prove the additional
connectivity assertion, which is needed here.

Let $A,B$ be the parts of $K_{r,t}$, let $uv$ be the subdivided edge with
$u\in A$ and $v\in B$, and let $w$ be the new subdivision vertex.  In
$\comp{S_n}$, both $A$ and $B$ induce cliques, the only edge between these
two cliques is $uv$, and
\[
 N_{\comp{S_n}}(w)=(A\setminus\{u\})\cup(B\setminus\{v\}).
\]
Here $r,t\geq2$.  Removing $w$ leaves the two cliques joined by $uv$.
Removing $u$ or $v$ leaves $w$ joining the nonempty remainders of the two
cliques, and removing any other vertex leaves both the edge $uv$ and the
required connections through $w$.  Thus no single vertex disconnects
$\comp{S_n}$, and $\kappa(\comp{S_n})\geq2$.  On the other hand, after
deleting $u$ and $w$, the nonempty sets $A\setminus\{u\}$ and $B$ are two
cliques with no edge between them.  Hence $\{u,w\}$ is a vertex cut and
$\kappa(\comp{S_n})=2$.
\end{proof}

\begin{theorem}\label{thm:k-two}
For every admissible order $n\geq3$,
\[
 \operatorname{spex}_{\kappa}(n,K_3;2)=
 \begin{cases}
  0,&n=3,\\
  1,&n=4,\\
  2,&n=5,\\
  \rhoA(B_{n,2}),&n\geq6.
 \end{cases}
\]
For $n=3$ the unique extremal graph is the empty graph.  For $n=4$ the
extremal graphs are $K_2\cup2K_1$ and $2K_2$.  For $n=5$ the unique
extremal graph is $C_5$, and for every $n\geq6$ the unique extremal graph
is $B_{n,2}$.  Consequently, $n_0(2)=6$.
\end{theorem}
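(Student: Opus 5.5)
We split on $n$.

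\textbf{Case $n\geq10$.} Here $n\geq4k+2$ with $k=2$, so Theorem~\ref{thm:main} applies directly.

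\textbf{Case $n=6,\dots,9$.} This is the real content. Take an admissible $G$. If $G$ is bipartite, Proposition~\ref{prop:bipartite-all-orders} (valid since $n\geq5=2k+1$) gives $\rhoA(G)\leq\rhoA(B_{n,2})$, with equality only for $B_{n,2}$. If $G$ is non-bipartite, Lemma~\ref{lem:lnw-order} gives $\rhoA(G)\leq\rhoA(S_n)$. It then suffices to prove $\rhoA(S_n)<\rhoA(B_{n,2})$ for $n=6,7,8,9$.

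The first thing to try is edge counting via Nosal. We have $e(S_n)=\floor{(n-1)^2/4}+1=s-b+1$, while Lemma~\ref{lem:root-location} gives $\rhoA(B_{n,2})^2>s-4$. So $s-b+1\leq s-4$, i.e. $b\geq5$, settles $n\geq10$ but not $n\leq9$. For $6\leq n\leq9$ we therefore compute $\rhoA(S_n)$ directly, using an equitable partition of $S_n$ into six classes: $u$, $v$, $w$, $A\setminus\{u\}$, $B\setminus\{v\}$. This gives a small quotient matrix whose largest root we compare numerically with the explicit value of $\rhoA(B_{n,2})^2$ from \eqref{eq:main-value}. Alternatively, we can evaluate the characteristic polynomial of $S_n$ at $\rhoA(B_{n,2})$ and check the sign.

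For example, at $n=6$ we have $S_6$ obtained from $K_{2,3}$ by subdividing an edge, and $B_{6,2}=K_{3,3}-M_2$ with $\rhoA^2=(6+\sqrt{20})/2\approx5.24$. Since $e(S_6)=5$, Nosal already gives $\rhoA(S_6)^2\leq5$, which is enough. For $n=7$, $e(S_7)=10$ while $\rhoA(B_{7,2})^2\approx(9+\sqrt{81-16})/2\approx8.53$, so Nosal fails and an explicit computation is needed; the same holds for $n=8,9$. I expect these four numeric comparisons to be the main obstacle. They are routine but must be done carefully, and if the quotient-matrix approach is messy, an alternative is to bound $\rhoA(S_n)$ above by a Rayleigh/row-sum argument with a weighted degree vector.

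\textbf{Case $n=3,4,5$.} We argue by degrees.
\begin{itemize}
\item $n=3$: $\kappa(\comp G)\geq2$ forces $\comp G=K_3$, so $G$ is empty and $\rhoA(G)=0$.
\item $n=4$: $\delta(\comp G)\geq2$ gives $\Delta(G)\leq1$, so $G$ is a matching and $\rhoA(G)\leq1$. We check which matchings have 2-connected complement: the empty graph gives $\comp G=K_4$, $K_2\cup2K_1$ gives $K_4-e$, and $2K_2$ gives $C_4$. All are admissible, and the maximizers are exactly the graphs with an edge.
\item $n=5$: $\Delta(G)\leq2$, so $\rhoA(G)\leq2$, with equality iff some component is $2$-regular, i.e. a cycle, hence $C_4$ or $C_5$ by triangle-freeness. $C_5$ is admissible since $\comp{C_5}=C_5$. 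For $G\supseteq C_4$ plus one vertex $x$, the complement has the two diagonals of the $C_4$ plus the edges at $x$, and $x$ is adjacent to all four. Deleting $x$ disconnects the complement, so such $G$ are not admissible. Hence $C_5$ is the unique extremal graph.
\end{itemize}

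Combining the cases gives the value table and $n_0(2)=6$. For $n_0(2)>5$, note that $B_{5,2}$ has spectral radius less than $2$: it is bipartite, so Proposition~\ref{prop:bipartite-all-orders} does not force it to beat $C_5$, and indeed $\rhoA(B_{5,2})^2=3<4$.
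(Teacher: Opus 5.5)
There is a genuine gap. Your reduction for $6\le n\le 9$ matches the paper's: bipartite competitors go through Proposition~\ref{prop:bipartite-all-orders}, non-bipartite ones through Lemma~\ref{lem:lnw-order}, and then you need the strict comparison $\rhoA(S_n)<\rhoA(B_{n,2})$. Your cases $n=3,4,5$ are correct. For $n=5$, your argument via $\Delta(G)\le2$ and $2$-regular components is a valid and more elementary alternative to comparing $S_5=C_5$ with $B_{5,2}\cong P_5$. The gap is that the decisive comparison is never established.

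\textbf{The $n=6$ shortcut rests on two wrong numbers.}
\begin{itemize}
\item $S_6$ is $K_{2,3}$, which has six edges, with one edge subdivided. So $\e(S_6)=7$, consistent with $\e(S_n)=s-b+1=9-3+1$.
\item With $(p,q,k)=(3,3,2)$, formula \eqref{eq:spectrum-general} gives $\rhoA(B_{6,2})^2=(6+\sqrt{32})/2=3+2\sqrt2\approx5.83$, not $(6+\sqrt{20})/2$.
\end{itemize}
Nosal therefore gives only $\rhoA(S_6)^2\le7$. Even the sharper non-bipartite bound $m-1=6$ does not separate the two graphs. In fact $\rhoA(S_6)\approx2.39$ against $1+\sqrt2\approx2.414$, so no edge-counting argument can work and an exact computation is unavoidable.

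The paper uses the five-cell partition $u,v,w,A\setminus\{u\},B\setminus\{v\}$ (five cells, not six). This gives the quotient polynomial $f_6(x)=x^5-7x^3+9x-4$. Since $S_6\supseteq C_5$, one has $\rhoA(S_6)\ge2$, and $f_6$ is increasing on $[2,\infty)$. Then $f_6(1+\sqrt2)=3(\sqrt2-1)>0$ finishes the case. Because $n=6$ is exactly the order that pins down $n_0(2)=6$, this is the crux of the theorem, not a routine detail.

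\textbf{The orders $n=7,8,9$ are left undone.} Your $n=7$ value is also off. Here $(p-k)(q-k)=2$, so $\rhoA(B_{7,2})^2=(9+\sqrt{73})/2\approx8.77$, not $(9+\sqrt{65})/2$. The paper settles $n=7$ with $f_7(x)=x^5-10x^3+16x-8$. This polynomial is increasing beyond $20/7$, and $20/7\le\rhoA(S_7)$ by the Rayleigh quotient of the all-ones vector. It then checks $f_7(\sqrt\mu)=\sqrt\mu(14-\mu)-8>0$ for $\mu=(9+\sqrt{73})/2$.

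For $n=8,9$ no explicit computation is needed once you use the ingredient you are missing: the Lin--Ning--Wu fixed-size bound $\rhoA(G)^2\le\e(G)-1$ for non-bipartite triangle-free $G$. With $\e(S_n)=s-b+1$, it gives $\rhoA(S_n)^2\le s-b\le s-4<\rhoA(B_{n,2})^2$ as soon as $b\ge4$. This lowers your Nosal threshold from $b\ge5$ to $b\ge4$, i.e.\ to $n\ge8$, leaving only $n=6,7$ for exact calculation.
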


\begin{proof}
For $n=3$, the condition $\kappa(\comp G)\geq2$ forces
$\comp G=K_3$, so $G$ is empty.  For $n=4$, the minimum-degree inequality
$\delta(\comp G)\geq2$ gives $\Delta(G)\leq1$.  Thus $G$ is a matching and
$\rhoA(G)\leq1$.  Both nonempty possibilities
$K_2\cup2K_1$ and $2K_2$ attain this value; their complements are,
respectively, $K_4-e$ and $C_4$, both of which are $2$-connected.  This
proves the first two cases.

Let $n\geq5$.  Proposition~\ref{prop:bipartite-all-orders} says that the
unique bipartite maximizer is $B_{n,2}$.  Lemma~\ref{lem:lnw-order} says
that the unique non-bipartite maximizer is $S_n$, and its complement is
$2$-connected.  It therefore remains only to compare these two candidates.
For $n=5$, one has
\[
 S_5=C_5,\qquad \rhoA(S_5)=2,
 \qquad B_{5,2}\cong P_5,\qquad \rhoA(B_{5,2})=\sqrt3.
\]
Thus $C_5$ is the unique extremal graph at order five.

We first perform the comparison for $n\geq8$.  Put
$s=\floor{n^2/4}$ and $b=\floor{n/2}$.  Subdividing an edge replaces one edge by two, so \eqref{eq:nonbip-threshold} gives
\[
 \e(S_n)=\floor{\frac{(n-1)^2}{4}}+1=s-b+1.
\]
The fixed-size theorem of Lin, Ning, and Wu
\cite{LinNingWu2021} yields
\[
 \rhoA(S_n)^2\leq \e(S_n)-1=s-b.
\]
Since $b\geq4$, Lemma~\ref{lem:root-location}, applied with $k=2$, gives
\[
 \rhoA(B_{n,2})^2>s-4\geq s-b\geq\rhoA(S_n)^2.
\]
Hence $B_{n,2}$ strictly dominates $S_n$ for every $n\geq8$.

For clarity, we give exact comparisons for the two remaining orders.  In $S_n$, retain the notation $u,v,w,A\setminus\{u\},B\setminus\{v\}$ from
the proof of Lemma~\ref{lem:lnw-order}.  These five cells form an equitable partition, with quotient matrix
\begin{equation}\label{eq:Sn-quotient}
 R_{r,t}=\begin{pmatrix}
 0&0&1&0&t-1\\
 0&0&1&r-1&0\\
 1&1&0&0&0\\
 0&1&0&0&t-1\\
 1&0&0&r-1&0
 \end{pmatrix}.
\end{equation}
Indeed, the entries in each row are the numbers of neighbors that a vertex in the corresponding cell has in the five cells.  The graph $S_n$ is connected, and permutations inside either of the last two cells are graph automorphisms.  Uniqueness of the positive Perron vector therefore makes it constant on every cell.  Consequently $\rhoA(S_n)$ is the Perron root of $R_{r,t}$.

For $n=6$, we have $(r,t)=(2,3)$.  Expanding
$\det(xI-R_{2,3})$ gives
\[
 f_6(x)=x^5-7x^3+9x-4.
\]
The graph $S_6$ contains a $5$-cycle, so $\rhoA(S_6)\geq2$.  Moreover,
\[
 f_6'(x)=5x^4-21x^2+9>0\qquad(x\geq2):
\]
after putting $y=x^2\geq4$, the expression $5y^2-21y+9$ has value $5$ at
$y=4$ and is strictly increasing thereon.  Proposition~\ref{prop:spectrum}
gives $\rhoA(B_{6,2})=1+\sqrt2$, and direct substitution gives
\[
 f_6(1+\sqrt2)=3(\sqrt2-1)>0.
\]
Since $f_6(\rhoA(S_6))=0$ and $f_6$ is strictly increasing on the relevant
interval, $\rhoA(S_6)<\rhoA(B_{6,2})$.

For $n=7$, we have $(r,t)=(3,3)$ and
\[
 f_7(x):=\det(xI-R_{3,3})=x^5-10x^3+16x-8.
\]
The graph $S_7$ has ten edges.  The Rayleigh quotient of the all-ones
vector therefore gives
\[
 \rhoA(S_7)\geq\frac{2e(S_7)}7=\frac{20}7.
\]
Now $f_7'(x)=5x^4-30x^2+16$.  Its larger zero as a polynomial in $x^2$
is
\[
 3+\frac{\sqrt{145}}5<\left(\frac{20}7\right)^2,
\]
so $f_7$ is strictly increasing for $x\geq20/7$.  Put
$\mu=\rhoA(B_{7,2})^2$.  Proposition~\ref{prop:spectrum} gives
\[
 \mu^2-9\mu+2=0,
 \qquad \mu=\frac{9+\sqrt{73}}2.
\]
In particular, $8.5<\mu<9$, and hence
\begin{align*}
 f_7(\sqrt\mu)
 &=\sqrt\mu\,(\mu^2-10\mu+16)-8\\
 &=\sqrt\mu\,(14-\mu)-8>0.
\end{align*}
The last inequality follows already from $\sqrt\mu>2$ and
$14-\mu>5$.  Since both relevant numbers lie in the interval of strict
increase and $f_7(\rhoA(S_7))=0$, we obtain
$\rhoA(S_7)<\rhoA(B_{7,2})$.

For every $n\geq6$, therefore, the bipartite candidate strictly dominates
the unique non-bipartite candidate.  Proposition~\ref{prop:bipartite-all-orders}
then supplies uniqueness of $B_{n,2}$ and completes the proof.
\end{proof}

The exact expression also quantifies the spectral cost of imposing
$k$-connectivity on the complement.

\begin{corollary}\label{cor:asymptotic}
For fixed $k$ and $n\to\infty$, put $s=\floor{n^2/4}$.  Then
\[
 \rhoA(B_{n,k})
 =\sqrt{s}-\frac{k}{\sqrt{s}}+O_k(n^{-2}).
\]
In particular,
\[
 \rhoA(B_{n,k})=
 \begin{cases}
 \displaystyle \frac n2-\frac{2k}{n}+O_k(n^{-2}),& n\text{ even},\\[6pt]
 \displaystyle \frac n2-\frac{2k+1/4}{n}+O_k(n^{-2}),& n\text{ odd}.
 \end{cases}
\]
\end{corollary}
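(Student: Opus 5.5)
Starting from Lemma~\ref{lem:root-location}, $\mu:=\rhoA(B_{n,k})^2$ is the larger root of
\[
 z^2-(s-2k+1)z+(a-k)(b-k)=0,\qquad s=ab .
\]
The plan is to expand this root in powers of $1/s$ and then take a square root.

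By Vieta's formula $\mu=s-2k+1-\eta$, where $\eta$ is the smaller root. We also have $\eta=(a-k)(b-k)/\mu$. Since $(a-k)(b-k)=s-kn+k^2$ and $\mu=s+O_k(1)$, this gives
\[
 \eta=\frac{s-kn+k^2}{s-2k+O(1)}=1-\frac{kn}{s}+O_k(n^{-2}).
\]
Here we use $kn/s=O_k(1/n)$ and $k^2/s,\,k/s=O_k(n^{-2})$. Hence
\[
 \mu=s-2k+\frac{kn}{s}+O_k(n^{-2}).
\]

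Next take square roots:
\[
 \sqrt\mu=\sqrt s\,\sqrt{1+\delta},\qquad \delta=\frac{-2k+kn/s+O_k(n^{-2})}{s}=O_k(n^{-2}).
\]
Then $\sqrt{1+\delta}=1+\delta/2+O(\delta^2)$ with $\delta^2=O_k(n^{-4})$, so
\[
 \sqrt\mu=\sqrt s-\frac{k}{\sqrt s}+\frac{kn}{2s^{3/2}}+O_k(n^{-3}).
\]
The middle correction term is $\frac{kn}{2s^{3/2}}\asymp 4k/n^2$, which is $O_k(n^{-2})$, so it is absorbed into the error. This proves the first displayed formula.

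For the parity forms, take $n$ even, so $s=n^2/4$, $\sqrt s=n/2$ and $k/\sqrt s=2k/n$. This gives $n/2-2k/n$.

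For $n$ odd, $s=(n^2-1)/4$. Then
\[
 \sqrt s=\frac n2\sqrt{1-n^{-2}}=\frac n2-\frac1{4n}+O(n^{-3}),
 \qquad
 \frac{k}{\sqrt s}=\frac{2k}{n}+O(n^{-3}),
\]
which yields $n/2-(2k+1/4)/n$.

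The delicate step is the bookkeeping in the expansion of $\eta$. The quantity $1-\eta$ is of order $1/n$, not $1/n^2$, so it must not be discarded at the level of $\mu$. It contributes only $O(n^{-2})$ after division by $2\sqrt s$, and the proof has to check this carefully. The remaining steps are routine Taylor estimates with constants depending only on $k$.
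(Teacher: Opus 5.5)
Your proof is correct and follows essentially the same route as the paper: you show the smaller root $\eta$ lies within $O_k(n^{-1})$ of $1$, use Vieta to write $\mu=s-2k+1-\eta$, and Taylor-expand the square root, with the $1-\eta$ contribution absorbed into $O_k(n^{-2})$ after division by $2\sqrt s$. The only cosmetic difference is how you estimate $\eta$. You use the product of the roots, which gives the explicit leading term $kn/s$. The paper instead substitutes $\eta=1-\varepsilon$ into the quadratic and only bounds $0<\varepsilon\leq k(n-k-2)/(s-2k-1)$.
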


\begin{proof}
We start by fixing $k$.  For all sufficiently large $n$, the hypotheses of Lemma~\ref{lem:root-location} hold.  Let $\eta\in(0,1)$ be the smaller root of the quadratic
\begin{equation}\label{eq:asymptotic-quadratic}
 z^2-(s-2k+1)z+s-kn+k^2=0,
\end{equation}
and let
\[
 \mu=\rhoA(B_{n,k})^2
\]
be its larger root.  Since $0<\eta<1$, we may write
\[
 \eta=1-\varepsilon, \qquad 0<\varepsilon<1.
\]
Substituting $z=1-\varepsilon$ into \eqref{eq:asymptotic-quadratic} gives
\begin{align*}
0
 &=(1-\varepsilon)^2
   -(s-2k+1)(1-\varepsilon)+s-kn+k^2\\
 &=\bigl(1-(s-2k+1)+s-kn+k^2\bigr)
   +\bigl(s-2k-1\bigr)\varepsilon+\varepsilon^2\\
 &=-k(n-k-2)+(s-2k-1)\varepsilon+\varepsilon^2.
\end{align*}
Therefore
\begin{equation}\label{eq:epsilon-identity}
 (s-2k-1)\varepsilon+\varepsilon^2=k(n-k-2).
\end{equation}

Because $s=\floor{n^2/4}$, we have $s-2k-1=\Theta(n^2)$ for fixed $k$,
whereas $k(n-k-2)=O_k(n)$.  For sufficiently large $n$ the coefficient
$s-2k-1$ is positive.  Dropping the nonnegative term
$\varepsilon^2$ from the left-hand side of
\eqref{eq:epsilon-identity} yields
\[
 0<\varepsilon
 \leq\frac{k(n-k-2)}{s-2k-1}
 =O_k(n^{-1}).
\]
Thus the smaller root differs from $1$ by only $O_k(n^{-1})$.

By Vieta's formula,
\[
 \mu+\eta=s-2k+1.
\]
Using $\eta=1-\varepsilon$, we obtain the exact relation
\[
 \mu=s-2k+\varepsilon.
\]
Set $\delta=-2k+\varepsilon$.  Then $\delta=O_k(1)$ and
\[
 \rhoA(B_{n,k})=\sqrt{s+\delta}.
\]
Taylor's formula at $s$ gives
\begin{align*}
 \sqrt{s+\delta}
 &=\sqrt{s}+\frac{\delta}{2\sqrt{s}}
   +O_k\left(\frac{\delta^2}{s^{3/2}}\right)\\
 &=\sqrt{s}-\frac{k}{\sqrt{s}}
   +\frac{\varepsilon}{2\sqrt{s}}+O_k(n^{-3}).
\end{align*}
Here $s=\Theta(n^2)$ and $\varepsilon=O_k(n^{-1})$, so
\[
 \frac{\varepsilon}{2\sqrt{s}}=O_k(n^{-2}).
\]
Consequently,
\[
 \rhoA(B_{n,k})
 =\sqrt{s}-\frac{k}{\sqrt{s}}+O_k(n^{-2}).
\]

It remains to express this expansion in terms of $n$.  If $n$ is even,
then $s=n^2/4$, and therefore
\[
 \sqrt{s}=\frac n2, \qquad \frac{k}{\sqrt{s}}=\frac{2k}{n}.
\]
This gives
\[
 \rhoA(B_{n,k})=\frac n2-\frac{2k}{n}+O_k(n^{-2}).
\]
If $n$ is odd, then $s=(n^2-1)/4$.  Using the binomial expansions
$(1-x)^{1/2}=1-x/2+O(x^2)$ and
$(1-x)^{-1/2}=1+x/2+O(x^2)$ with $x=n^{-2}$, we obtain
\begin{align*}
 \sqrt{s}
 &=\frac n2\sqrt{1-\frac1{n^2}}
   =\frac n2-\frac1{4n}+O(n^{-3}),\\
 \frac1{\sqrt{s}}
 &=\frac2n\left(1-\frac1{n^2}\right)^{-1/2}
   =\frac2n+O(n^{-3}).
\end{align*}
Hence
\begin{align*}
 \rhoA(B_{n,k})
 &=\left(\frac n2-\frac1{4n}+O(n^{-3})\right)
   -k\left(\frac2n+O(n^{-3})\right)+O_k(n^{-2})\\
 &=\frac n2-\frac{2k+1/4}{n}+O_k(n^{-2}).
\end{align*}
This proves both parity-specific formulas.
\end{proof}

\section{Concluding remarks and open problems}

Theorem~\ref{thm:main} gives a higher-connectivity spectral counterpart to
the $\alpha=2$ edge-extremal result of Bougard and Joret.  The extremal
construction has a transparent dual description: its complement consists
of two balanced cliques joined by a matching of size $k$.  The proof also
explains why a matching is forced.  Once bipartiteness is known, a vertex
cover of fewer than $k$ missing cross-edges would be a vertex cut of the
complement, and K\H{o}nig's theorem converts this obstruction into the
desired matching.  Proposition~\ref{prop:bipartite-all-orders} shows that
this structural and spectral part of the argument already works for every
$n\geq2k+1$; the large-order hypothesis is needed only to exclude
non-bipartite competitors.

The hypothesis $n\geq4k+2$ enters only in the density-to-bipartiteness
step.  More precisely, it ensures that the spectral lower bound supplied
by $B_{n,k}$ crosses the sharp non-bipartite edge threshold in
Lemma~\ref{lem:density}(ii).  Indeed, the proof gives
$\e(G)\geq s-2k+1$, whereas a non-bipartite triangle-free graph has at
most $s-b+1$ edges, where
$s=\floor{n^2/4}$ and $b=\floor{n/2}$.  The former bound excludes the
latter precisely when
\[
 s-2k+1\geq s-b+2,
\]
or equivalently $b\geq2k+1$.  Thus $n\geq4k+2$ is the exact threshold
for the present proof method; we do not claim that it is the best possible
threshold for the theorem itself.

The natural minimum-order range cannot simply be included without
exceptions.  For example, take $(n,k)=(5,2)$.  Then
$\comp{C_5}\cong C_5$ is $2$-connected and $\rhoA(C_5)=2$, while
\[
 B_{5,2}=K_{3,2}-M_2\cong P_5,
 \qquad \rhoA(B_{5,2})=\rhoA(P_5)=\sqrt3.
\]
Hence $B_{5,2}$ is not extremal.  Uniqueness can also fail at the boundary
order $n=2k$: for $(n,k)=(6,3)$, one has
$B_{6,3}=K_{3,3}-M_3\cong C_6$ and $\rhoA(B_{6,3})=2$, whereas
$C_5\cup K_1$ also has spectral radius $2$ and its complement
$K_1\vee C_5$ is $3$-connected.  Indeed, deletion of at most two
vertices leaves this join connected: if its universal vertex survives,
it joins all remaining vertices, while deleting it and at most one
cycle vertex leaves a connected path or cycle.  Since every cycle vertex
has degree three, its connectivity is exactly three.  Theorem~\ref{thm:k-two}
shows that the first obstruction is sharp, namely $n_0(2)=6$, while the
second gives $n_0(3)\geq7$.  These examples make the following remaining
exact-range problem natural.

\begin{problem}\label{prob:range}
For every $k\geq3$, determine $\operatorname{spex}_{\kappa}(n,K_3;k)$ at
the orders not covered by Theorem~\ref{thm:main} and
Proposition~\ref{prop:boundary-order}, and determine the least $n_0(k)$
such that $B_{n,k}$ is the unique extremal graph for every
$n\geq n_0(k)$.
\end{problem}

Corollary~\ref{cor:k-one} gives $n_0(1)=3$, and
Theorem~\ref{thm:k-two} gives $n_0(2)=6$.  Proposition~\ref{prop:boundary-order}
also determines the value at the boundary order $n=2k$ for every $k\geq2$,
while Proposition~\ref{prop:bipartite-all-orders} completely disposes of
all bipartite competitors for $n\geq2k+1$.  Thus for $k\geq3$ the remaining
issue is to compare the balanced bipartite candidate with dense
non-bipartite triangle-free graphs whose complements retain
$k$-connectivity.  The order-based theorem of Lin, Ning, and Wu
\cite{LinNingWu2021} identifies the unrestricted non-bipartite spectral
maximizer, but for $k\geq3$ its complement is only $2$-connected; a
connectivity-sensitive refinement appears to be necessary.

The next direction is to replace triangles by larger cliques.  If
$r\geq3$, a $K_{r+1}$-free spectral extremizer is expected first to become
$r$-partite and then to be obtained from a nearly balanced complete
$r$-partite graph by deleting a sparse cross-edge configuration whose
complement is $k$-connected.  Unlike the case $r=2$, the auxiliary missing-edge graph now links $r$ cliques, and neither the optimal linkage nor its
spectral placement is immediate.

\begin{problem}\label{prob:general-r}
For fixed $r,k\geq2$ and sufficiently large $n$, determine the maximum
spectral radius of an $n$-vertex $K_{r+1}$-free graph $G$ satisfying
$\kappa(\comp G)\geq k$, and characterize all equality cases.
\end{problem}

One may also replace vertex-connectivity by edge-connectivity.  In the bipartite reduction, edge-connectivity controls the number, rather than the vertex-cover number, of missing cross-edges.  This suggests that the spectrally optimal deleted set may be concentrated rather than matched, leading to a different matrix optimization problem.  Finally, signless Laplacian and normalized adjacency versions of Problems \ref{prob:range}--\ref{prob:general-r} appear to be unexplored.

\section*{Declaration of generative AI and AI-assisted technologies in the manuscript preparation process}

During the preparation of this work, the authors used OpenAI Codex to discuss proof strategies, organize and check bibliographic
information, check algebraic calculations and proof exposition, and improve language and readability.  After using this tool, the authors
reviewed and edited the content as needed, independently verified the cited sources, calculations, statements, and proofs, and take full
responsibility for the content of the article.

\section*{Declaration of competing interest}

The authors declare that they have no known competing financial interests or personal relationships that could have appeared to influence the work reported in this article.


\section*{Data availability}

Data sharing is not applicable to this article as no datasets were generated or analyzed during the current study.

\end{document}